\numberwithin{equation}{section} 
\title{Dualities and reciprocities on graphs on surfaces}
\author{Woo-Seok Jung}
\address{Department of Mathematics, Sogang University, Seoul, South Korea}
\email{jungws@sogang.ac.kr}
\author{Jaeseong Oh}
\address{Department of Mathematics, Seoul National University, Seoul,
South Korea}
\email{jaeseong\_oh@snu.ac.kr}
\date{\today}
\keywords{Graph on a surface, Tension, Flow, Acyclic orientation, Totally cyclic orientation, Duality, Ehrhart theory, Combinatorial reciprocity}
\subjclass[2010]{Primary: 05C10, 05C30, 05C31}
\date{\today}
\newtheorem{thm}{Theorem}[section]
\newtheorem{prop}[thm]{Proposition}
\newtheorem{lem}[thm]{Lemma}
\newtheorem{cor}[thm]{Corollary}
\theoremstyle{definition}
\newtheorem{defn}[thm]{Definition}
\newtheorem{ex}[thm]{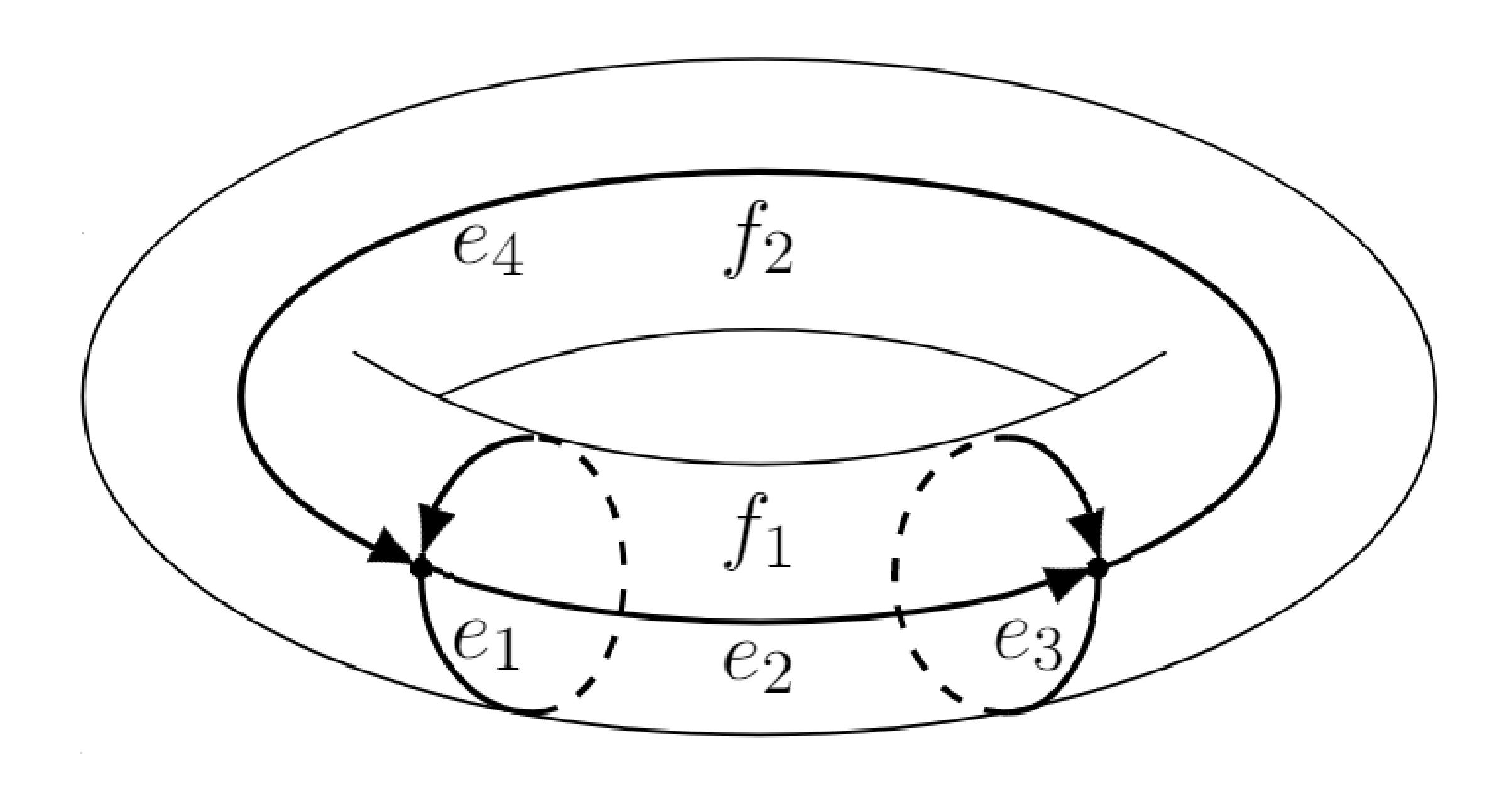}
\newtheorem{rem}[thm]{Remark}
\newcommand\GG{\mathbb{G}}
\newcommand{\sslash}{\mathbin{/\mkern-6mu/}}
\newcommand{\ssetminus}{\mathbin{\setminus\mkern-6mu\setminus}}
\newcommand{\indsize}{\scriptsize}
\newcommand{\colind}[2]{\displaystyle\smash{\mathop{#1}^{\raisebox{.5\normalbaselineskip}{\indsize #2}}}}
\newcommand{\rowind}[1]{\mbox{\indsize #1}}
\definecolor{green1}{rgb}{0,0.5,0}
\definecolor{blue1}{rgb}{0,0,0.7}
\newenvironment{red1}{\relax\color{red}}{\hspace*{.5ex}\relax}
\newenvironment{blue1}{\relax\color{blue1}}{\hspace*{.5ex}\relax}
\newenvironment{green1}{\relax\color{green1}}{\hspace*{.5ex}\relax}
\newenvironment{magenta}{\relax\color{magenta}}{\hspace*{.5ex}\relax}
\newenvironment{purple}{\relax\color{purple}}{\hspace*{.5ex}\relax}
\newcommand{\bema}{\begin{magenta}}
\newcommand{\ema}{\end{magenta}}
\newcommand{\bepu}{\begin{purple}}
\newcommand{\epu}{\end{purple}}
\begin{document}

\maketitle
% \WS{1. Lemma 4.6, Theorem 4.13 고치기.

% 2. Response 2개 쓰기(Lemma 4.6, Theorem 4.2 증명)
% }

% \JS{1. Cut set C, cycle C, cocycle C, N 답이 정 없으면 통일만 하자. mathfrak도 써보고

% 2. supp, ker TBO, AO이런 거 operatorname?}
\begin{abstract}
We extend the duality between acyclic orientations and totally cyclic orientations on planar graphs to dualities on graphs on orientable surfaces by introducing boundary acyclic orientations and totally bi-walkable orientations. In addition, we provide a reciprocity theorem connecting local tensions and boundary acyclic orientations. Furthermore, we define the balanced flow polynomial which is connected with tension polynomial by duality and with totally bi-walkable orientations by reciprocity.
\end{abstract}

\section{Introduction}
Enumeration of tensions, flows, acyclic orientations, and totally cyclic orientations of graphs is a classical topic in enumerative combinatorics. These are related by `planar duality' and `reciprocity'. 

A \emph{(cellularly embedded) graph $G$ on a surface $\Sigma$} is an embedding of a graph $G$ into a closed orientable surface $\Sigma$ such that $\Sigma\setminus G$ is a disjoint union of disks. In this paper, we study dualities and reciprocities on graphs on surfaces.

\subsection{The dualities}
Let $G$ be an (arbitrarily) graph with an orientation $\sigma$ and $\mathbb{Z}_k$ be a cyclic group of order $k$. A $\mathbb{Z}_k$-\emph{tension} of $G$ is a mapping $x:E\overrightarrow{}\mathbb{Z}_k$ such that for each directed cycle $\mathcal{C}$ in $G$, 
$$\sum_{e\in C}s(e)x(e)=0,$$
where $s(e)=1$ if the orientation of $e$ in $\mathcal{C}$ and $\sigma$ agree and $s(e)=-1$ otherwise.
A $\mathbb{Z}_k$-\emph{flow} of $G$ is a mapping $x:E\overrightarrow{} \mathbb{Z}_k$ such that for each vertex $v$ in $G$,
$$\sum_{h(e)=v}x(e)=\sum_{t(e)=v}x(e),$$ where $h(e)$ and $t(e)$ denote the head and tail of an edge $e$ in $\sigma$.
Tutte \cite{Tut47} showed that both the number of nowhere-zero $\mathbb{Z}_k$-tension of $G$ and the number of nowhere-zero $\mathbb{Z}_k$-flow are polynomials in $k$. The \emph{tension polynomial} $\tau(G;k)$ counts nowhere-zero $\mathbb{Z}_k$-tensions and the \emph{flow polynomial} $\phi(G;k)$ counts nowhere-zero $\mathbb{Z}_k$-flows. For a planar graph $G$ and its dual graph $G^{*}$, there is a tension-flow duality, which says that $$\phi(G;k)=\tau(G^*;k).$$ This might be one of the main attractions for Tutte to define the \emph{Tutte polynomial}. Similar to the way the dual graph is defined for a planar graph, the dual graph can be naturally defined for a graph on a surface. However, the tension-flow duality fails for graphs on surfaces. For a graph $G$ on a torus in Figure \ref{fig:dual}, $$\tau(G;k)=0\neq (k-1)^2=\phi(G^*;k).$$ In \cite{DGMVZ05}, DeVos et al. defined the \emph{local tension} on graphs on surfaces and described the duality between local tensions and flows. In this paper, we define the \emph{balanced flow} which is dual of the tension.

An \emph{acyclic orientation} of $G$ is an orientation with no directed cycle. A \emph{totally cyclic orientation} of $G$ is an orientation such that for each edge $e$ in $G$, there is a directed cycle containing $e$. There is a duality between acyclic orientations and totally cyclic orientations on planar graphs. This duality also fails for graphs on surfaces, in general. In Figure \ref{fig:dual}, there is no acyclic orientation of $G$, while there are $4$ totally cyclic orientations of $G^*$. To extend the duality of acyclic orientations and totally cyclic orientations for planar graphs to graphs on surfaces, we introduce \emph{boundary acyclic orientations} and \emph{totally bi-walkable orientations}, which are dual of totally cyclic orientations and acyclic orientations, respectively.

\begin{figure}[H]
\resizebox{0.6\textwidth}{!}{
\begin{tabular}{c c}

\begin{tikzpicture}
  \useasboundingbox (-3,-1.5) rectangle (3,1.5);
  \draw (0,0) ellipse (3 and 1.5);
  \begin{scope}
    \clip (0,-1.8) ellipse (3 and 2.5);
    \draw (0,2.2) ellipse (3 and 2.5);
  \end{scope}
  \begin{scope}
    \clip (0,2.2) ellipse (3 and 2.5);
    \draw (0,-2.2) ellipse (3 and 2.5);
  \end{scope}
  
%graph
\draw[thick] (0,0) ellipse (2.3 and 1);
\draw[thick, dashed] (-1,-1.4) arc (-90:90:.4 and 0.61);
\draw[thick] (-1,-1.4) arc (270:90:.5 and .61);

\node[circle,fill,inner sep=1pt] at (-1.5,-0.75) {};
\end{tikzpicture}
&
\begin{tikzpicture}
  \useasboundingbox (-3,-1.5) rectangle (3,1.5);
  \draw (0,0) ellipse (3 and 1.5);
  \begin{scope}
    \clip (0,-1.8) ellipse (3 and 2.5);
    \draw (0,2.2) ellipse (3 and 2.5);
  \end{scope}
  \begin{scope}
    \clip (0,2.2) ellipse (3 and 2.5);
    \draw (0,-2.2) ellipse (3 and 2.5);
  \end{scope}
  
%graph
\draw[thick,gray!60] (0,0) ellipse (2.3 and 1);

\draw[thick, gray!60,dashed] (-1,-1.4) arc (-90:90:.4 and 0.61);
\draw[thick,gray!60] (-1,-1.4) arc (270:90:.5 and .61);
\node[circle,fill=gray!60,inner sep=1pt] at (-1.5,-0.75) {};

\draw[thick,red] (0,0) ellipse (2 and 0.7);
\draw[thick,red] (1,-1.4) arc (-90:90:.5 and .61);
\draw[thick, red,dashed] (1,-1.4) arc (270:90:.4 and 0.61);
\node[circle,fill=red,inner sep=1pt] at (1.44,-0.49) {};
\end{tikzpicture}
\end{tabular}
}
    \caption{A graph $G$(black) on a torus its dual graph $G^*$(red).} \label{fig:dual}
\end{figure}
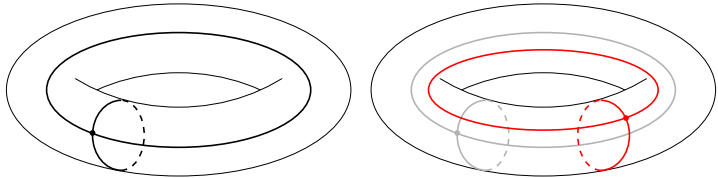

\subsection{The reciprocities}
Another main purpose of this paper is to present a reciprocity theorem connecting local tension polynomials and boundary acyclic orientations. Before explaining our work, we give a brief history of studying reciprocity on the enumerations of coloring and flows of graphs. 
Stanley \cite[Theorem 1.2]{Sta73} provided the first example of combinatorial reciprocity, which states that the absolute value of the chromatic polynomial of a graph evaluated at $-k$ is equal to the number of pairs $(c, \mathfrak{o})$, where $c$ is a coloring and $\mathfrak{o}$ is an acyclic orientation compatible with $c$.
By exploiting the Ehrhart theory, Breuer and Sanyal proved reciprocity theorems for flow polynomials \cite[Theorem 4.2]{BS12} and tension polynomials \cite[Theorem 2.2]{BS12}.

Following the idea of \cite{BS12}, we provide Theorem \ref{reciprocity for local tension} which relates local tensions and boundary acyclic orientations on graphs on surfaces. We also show that the balanced flows are related to totally bi-walkable orientations by reciprocity.
In addition, we present a reciprocity theorem for integral $k$-local tensions (defined in Section~\ref{section4.2}) on graphs on surfaces.

\subsection{Organization of the paper}
We begin in Section~\ref{section2} by giving basic definitions and properties of graphs on surfaces. In Section~\ref{section3}, we define the boundary acyclic orientation and the totally bi-walkable orientation of a graph on a surface and prove two duality theorems (Theorem \ref{duality of local AO and TCO} and Theorem \ref{duality of AO and TBO}) each of which involves those orientations. For application, we count the number of totally bi-walkable orientations under some conditions. 
In Section~\ref{section4}, we provide a proof of a reciprocity theorem for local tension polynomials (Theorem~\ref{reciprocity for local tension}). We also define the balanced flow and prove a reciprocity theorem for balanced flow polynomials. In the subsection~\ref{section4.2}, we prove a reciprocity theorem for integral local tensions.

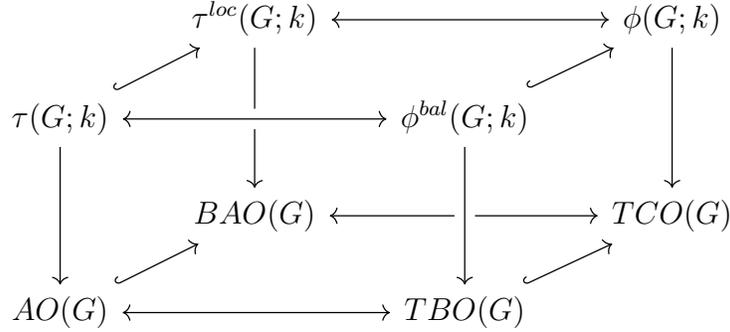
\begin{figure}[H]\label{cube}
\begin{tikzcd}[row sep=scriptsize, column sep=scriptsize]
& \tau^{loc}(G;k) \arrow[rr, leftrightarrow] \arrow[dd] & & \phi(G;k) \arrow[dd] \\
\tau(G;k) \arrow[ur, hook] \arrow[rr, leftrightarrow, crossing over] \arrow[dd] & & \phi^{bal}(G;k) \arrow[ur, hook]\\
& BAO(G)  \arrow[rr, leftrightarrow] & & TCO(G)  \\
AO(G) \arrow[ur, hook] \arrow[rr, leftrightarrow] & & TBO(G) \arrow[ur, hook] \arrow[from=uu, crossing over]\\
\end{tikzcd}
    \caption{Dualities and reciprocities on graphs on surfaces.}
    \label{figure:cube}
\end{figure}

Figure \ref{figure:cube} summarizes main results of this paper. In the Figure, $\tau^{loc}(G;k)$ and $\phi^{bal}(G;k)$ represent the local tension polynomial and the balanced flow polynomial of $G$, respectively. And, $AO(G)$, $BAO(G)$, $TBO(G)$, and $TCO(G)$ denote the set of acyclic orientations, boundary acyclic orientations, totally bi-walkable orientations, and totally cyclic orientations of $G$, respectively. Dualities are represented by the horizontal lines. The vertical lines represent that the number of objects below can be obtained by specializing the polynomial above at $-1$ up to sign. Moreover, those objects are related by the reciprocities. The diagonal lines represent inclusions among corresponding objects (or the objects that are counted by given polynomials).

%%%%%%%%%%%%%%%%%%%%%%%%%%%%%%%%%%%%%%%%%%%%%%%%%%%%%%%%%%%%%%%%%%%%%%%%%%%%%%%%%%%%%%%%%%%%%%%%%%%%%%%%%%%%%%%%%%%%%%%%%%%%%%%%%%%%%%%%%%%%%%%%%%%%%%%%%%%%%%%%%%%%%%%%%%%%%%%%%%%%%%%%%%%%%%%%%%%%%%%%%%%%%%%%%%%%%%%%%%%%%%%%%%%%%%%%%%%%%%%%%%%%%%%%%%%%%%%%%%%%%%%%%%%%%%%%%%%%%%%%%%%%%%%%%%%%%%%%%%%%%%%%%%%%%%%%%%%%%%%%

\section{Preliminaries of Graphs on surfaces and ribbon graphs}\label{section2}
In this section, we give a brief overview of the basic definitions and properties of graphs on surfaces. A reader familiar with this topic may skip or skim this section. For details, we recommend \cite{EM13}.

A (abstract) \emph{graph} $G=(V,E)$ is given by a set of vertices $V$ and a set of edges $E$ with incidence relations between $V$ and $E$ such that $e\in E$ is either incident to two distinct vertices $v,w\in V$ or a single vertex $v\in V$. For the latter case, $e$ is called a loop.

Let $G=(V,E)$ be a graph and $\Sigma$ be a closed surface. A \emph{graph embedding} of $G$ on $\Sigma$ is a representation of the vertices of $G$ as distinct points in $\Sigma$ and the edges of $G$ as arcs connecting the points associated with vertices. In addition, this representation should give a topological embedding of $G$ in $\Sigma$, i.e., there is no intersection between edges or a vertex contained in the interior of an arc. Throughout this paper for a graph $G$ on a surface $\Sigma$, we assume that the surface $\Sigma$ is orientable and the graph $G$ is \emph{cellularly embedded} on $\Sigma$. By `cellularly embedded' we mean that the complement $\Sigma\setminus G$ is a disjoint union of discs each of which is called the \emph{face}. A graph on a surface is denoted by $G=(V,E,F)$ where $V,E,$ and $F$ are the set of vertices, edges, and faces of $G$, respectively. We denote the number of connected components of $\Sigma$ by $c(G\subseteq \Sigma)=c(G)$ and say $G$ is connected if $c(G)=1$. A graph $G$ on a surface $\Sigma$ is called \emph{planar} if $\Sigma$ is a disjoint union of spheres.

A graph on a surface can be equivalently described as a \emph{ribbon graph}. A ribbon graph $\GG=(V,E)$ is a surface with boundaries which is a union of two sets $V$ and $E$ of discs such that
\begin{enumerate}
    \item the vertices and edges intersect in disjoint line segments,
    \item each such line segment lies on the boundary of precisely one vertex and precisely one edge and
    \item every edge contains exactly two such line segments.
\end{enumerate}
Throughout this paper, we assume that ribbon graphs are orientable surfaces with boundaries. Note that we get a closed orientable surface if we glue a disk at each boundary component of $\GG$. The glued disks are called \emph{faces} of $\GG$, and we will denote a ribbon graph $\GG$ by $\GG=(V,E,F)$ where $V,E$, and $F$ are the set of vertices, edges, and faces of $\GG$, respectively.

Two concepts, graphs on surfaces and ribbon graphs, might seem different at first glance but they are describing the same objects. To see this, let $G$ be a graph on a surface  $\Sigma$. One can obtain a ribbon graph by `fattening' vertices and edges of $G$ in $\Sigma$ and forget the faces. By this identification, ribbon graphs are sometimes called \emph{fat graphs}. Conversely, one can get a graph on a surface from a ribbon graph by the other way around. This procedure is described in Figure \ref{figure:graph on surface and ribbon graph}. Via this correspondence, operations and properties of graphs on surfaces can be understood as those of ribbon graphs and vice versa.

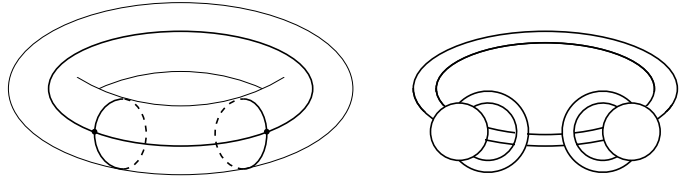
\begin{figure}[H]
\resizebox{0.6\textwidth}{!}{
\begin{tabular}{c c}

\begin{tikzpicture}
  \useasboundingbox (-3,-1.5) rectangle (3,1.5);
  \draw (0,0) ellipse (3 and 1.5);
  \begin{scope}
    \clip (0,-1.8) ellipse (3 and 2.5);
    \draw (0,2.2) ellipse (3 and 2.5);
  \end{scope}
  \begin{scope}
    \clip (0,2.2) ellipse (3 and 2.5);
    \draw (0,-2.2) ellipse (3 and 2.5);
  \end{scope}
  
%graph
\draw[thick] (0,0) ellipse (2.3 and 1);
\draw[thick, dashed] (-1,-1.4) arc (-90:90:.4 and 0.61);
\draw[thick] (-1,-1.4) arc (270:90:.5 and .61);
\draw[thick] (1.1,-1.4) arc (-90:90:.4 and 0.61);
\draw[thick, dashed] (1.1,-1.4) arc (270:90:.5 and .61);
\node[circle,fill,inner sep=1pt] at (1.5,-0.75) {};
\node[circle,fill,inner sep=1pt] at (-1.5,-0.75) {};
\end{tikzpicture}

&

\begin{tikzpicture}
  \useasboundingbox (-3,-1.5) rectangle (3,1.5);
\begin{scope}
    \clip (-3,-0.4) rectangle (3,5);
    \draw[thick] (0,0) ellipse (2.3 and 1);
\end{scope}

\begin{scope}
    \clip (-3,-3) rectangle (-1.95,0);
    \draw[thick] (0,0) ellipse (2.3 and 1);
\end{scope}

\begin{scope}
    \clip (-1.04,-3) rectangle (-0.53,0);
    \draw[thick] (0,0) ellipse (2.3 and 1);
\end{scope}

\begin{scope}
    \clip (-0.31,-3) rectangle (0.32,0);
    \draw[thick] (0,0) ellipse (2.3 and 1);
\end{scope}

\begin{scope}
    \clip (0.55,-3) rectangle (1.01,0);
    \draw[thick] (0,0) ellipse (2.3 and 1);
\end{scope}

\begin{scope}
    \clip (1.95,-1) rectangle (2.3,-0.3);
    \draw[thick] (0,0) ellipse (2.3 and 1);
\end{scope}

---
\begin{scope}
    \clip (-3,-0.05) rectangle (3,5);
    \draw[thick] (0,0) ellipse (1.9 and 0.8);
\end{scope}

\begin{scope}
    \clip (-3,-3) rectangle (-1.75,0);
    \draw[thick] (0,0) ellipse (1.9 and 0.8);
\end{scope}

\begin{scope}
    \clip (-3,-0.3) rectangle (3,3);
    \draw[thick] (0,0) ellipse (1.9 and 0.8);
\end{scope}

\begin{scope}
    \clip (-1.0,-3) rectangle (-0.53,0);
    \draw[thick] (0,0) ellipse (1.9 and 0.8);
\end{scope}

\begin{scope}
    \clip (-0.31,-3) rectangle (0.3,0);
    \draw[thick] (0,0) ellipse (1.9 and 0.8);
\end{scope}

\begin{scope}
    \clip (0.5,-3) rectangle (1.02,0);
    \draw[thick] (0,0) ellipse (1.9 and 0.8);
\end{scope}

\begin{scope}
    \clip (1.9,-1) rectangle (2.5,-0.3);
    \draw[thick] (0,0) ellipse (1.9 and 0.8);
\end{scope}

\draw[thick] (1.25,-0.75+0.866*0.5) arc (60:300:0.5);
\draw[thick] (1.5,-0.25) arc (45:315:1.414*0.5);
\draw[thick] (1.5,-0.75) circle (0.5); 
\draw[thick] (-1.5,-0.75) circle (0.5); 
\draw[thick] (-1.5,-0.25) arc (135:-135:1.414*0.5); 
\draw[thick] (-1.25,-0.75+0.866*0.5) arc (120:-120:0.5); 

\end{tikzpicture}
\end{tabular}
}
    \caption{A graph on a surface and its corresponding ribbon graph.}
    \label{figure:graph on surface and ribbon graph}
\end{figure}

Given a ribbon graph $\GG$ and an edge $e$ in $\GG$, there are two elementary but important operations called \emph{deletion} and \emph{contraction} of $e$. The deletion of $e$ in $\GG$, denoted by $\GG\setminus e$, is given by deleting the edge $e$ from the original graph $\GG$. To define contraction, consider two cases. For a non-loop edge $e$, assume that $e$ is incident to distinct vertices $v$ and $w$. The contraction $\GG/e$ is given by replacing three discs $e$, $v$, and $w$ by a single disc $e\cup v\cup w$ as a new vertex. This operation coincide with the contraction for abstract graphs. For a loop $e$ incident to a vertex $v$, the union $e\cup v$ is an annulus. The contraction $\GG/e$ is obtained by replacing two discs $e$ and $v$ by two new vertices given by two discs that bounds the boundary of $e\cup v$. Note that this operation does not coincide with the contraction for abstract graphs.
Applying a sequence of deletions and contractions, the order of operation is immaterial \cite[p. 923]{GKRV18}. Therefore, for disjoint subsets of edges $A, B\subseteq E$, contraction of edges in $A$ and deletion of edges in $B$ from $G$ is well defined and it is denoted by $\GG/A\setminus B$.
For a graph on a surface $G$, one can then define the operations of deletion and contraction on $G$ via the identification of graphs on surfaces and ribbon graphs. Note that a deletion and contraction on $G$ may change the surface $\Sigma$ in which $G$ is embedded. For example, $G$ is embedded in a torus in Figure \ref{figure:graph on surface and ribbon graph} and deletion of any two edges results in a planar graph.

\begin{rem}
Topologically, the contraction of loop $e$ attached to a vertex $v$ can be described as follow: Consider the surface with boundary $\Sigma\setminus (e\cup v)$. The boundary of this surface is given by two circles. By `contracting' the two circles, $\Sigma\setminus (e\cup v)$ is homotopy equivalent to a two punctured surface. The contraction of $e$ is obtained by filling the two punctures with two new vertices to get a closed surface and graph embedded in the surface.
\end{rem}

There is another important notion, the \emph{dual graph}. This operation switches the role of vertices and faces. To be more precise, the dual of a ribbon graph $\GG$ is obtained from $\GG$ by attaching new vertices (discs) at each connected component of the boundary of $\GG$, deleting the original vertices and considering the edges attached to the new vertices as \emph{dual edges}. For a planar ribbon graph $\GG$, this operation coincides with the usual planar dual. The dual graph of $\GG$ is denoted by $\GG^*$. Unlike  deletion and contraction, the dual operation does not change the surface by construction. Obviously, one can define dual operation for a graph on a surface $G$ via identification of graphs on surfaces and ribbon graphs. We end this section with a theorem which states that the dual operation exchanges deletion and contraction of graphs on surfaces.

\begin{prop} \cite[Chapter 4]{EM13} \label{del-con dual}
For a ribbon graph $\GG=(V,E,F)$ and $A\subseteq E$, deletion and contraction are dual to each others, i.e.,
$${\GG^*}\setminus A=(\GG/A)^*.$$
\end{prop}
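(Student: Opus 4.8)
The plan is to prove the statement for a single edge $e$ first and then bootstrap to arbitrary $A\subseteq E$ by induction, using the fact (cited from \cite{GKRV18}) that the order of deletions and contractions is immaterial, together with the functoriality of the dual operation. So the real content is the single-edge identity $\GG^*\setminus e = (\GG/e)^*$, and everything else is bookkeeping.

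\textbf{Single-edge case.} I would argue directly from the definition of the dual recalled just above: $\GG^*$ is built by gluing a new vertex-disc onto each boundary component of the surface-with-boundary $\GG$, deleting the old vertex-discs, and reinterpreting the edge-discs as dual edges; crucially the underlying surface-with-boundary is unchanged by dualizing. The key observation is that the ambient surface (with boundary) underlying $\GG^*\setminus e$ and the one underlying $(\GG/e)^*$ are literally the same: deleting a dual edge $e$ from $\GG^*$ removes the ribbon $e$, and this is exactly the same topological move as the one whose effect on $\GG$ is described in the remark — contracting $e$ in $\GG$ and then dualizing removes the edge-disc $e$ and reorganizes the vertex/face structure in the complementary way. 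I would split into the non-loop and loop cases to match the two-case definition of contraction: for a non-loop $e$ joining $v,w$, contraction merges $e\cup v\cup w$ into one vertex, and after dualizing this corresponds to deleting the dual edge $e$ from $\GG^*$ (whose endpoints are the two faces of $\GG$ flanking $e$); for a loop $e$ at $v$, contraction splits $v$ according to the two boundary circles of $e\cup v$, and dualizing turns this into the deletion of $e$ from $\GG^*$. In each case one checks that the boundary components (hence faces of the dual) match up, that the remaining edge-discs are attached in the same cyclic pattern, and that the vertex-discs agree.

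\textbf{Induction on $|A|$.} The base case $A=\emptyset$ is trivial. For the inductive step pick $e\in A$ and write $A=A'\sqcup\{e\}$. Then
$$\GG^*\setminus A=(\GG^*\setminus e)\setminus A'=(\GG/e)^*\setminus A',$$
using the single-edge case; now apply the inductive hypothesis to the ribbon graph $\GG/e$ and the edge set $A'$ (of size $|A|-1$) to get $(\GG/e)^*\setminus A'=((\GG/e)/A')^*$; and finally invoke order-independence of deletion/contraction to identify $(\GG/e)/A'=\GG/A$. This closes the induction.

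\textbf{Anticipated obstacle.} The honest work is entirely in the loop case of the single-edge identity: verifying that splitting the vertex $v$ along the two boundary circles of the annulus $e\cup v$ is precisely the dual move to deleting the loop's dual edge, including the correct matching of which remaining half-edges end up on which of the two new vertex-discs and how the face structure is preserved. I would handle this with a careful picture of the ribbon $e\cup v$ and its boundary, tracking the arcs where other edges meet $v$; alternatively, one can sidestep the case analysis by expressing the dual via the standard boundary-circle (``arrow presentation'' / rotation-system) description of ribbon graphs, under which both $\GG^*\setminus e$ and $(\GG/e)^*$ have the same combinatorial data by inspection. Since the excerpt cites \cite[Chapter 4]{EM13} for the statement, I would keep this proof brief and refer the reader there for the detailed verification of the pictures.
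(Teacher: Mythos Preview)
The paper does not supply its own proof of this proposition: it is stated with a citation to \cite[Chapter~4]{EM13} and used as a black box thereafter. Your sketch---reducing to the single-edge identity $\GG^*\setminus e=(\GG/e)^*$ by induction on $|A|$ via order-independence of deletion and contraction, and verifying the single-edge case by the non-loop/loop case split from the ribbon-graph definitions---is the standard argument and is essentially what one finds in the cited reference, so there is nothing to compare against here beyond noting that your outline is correct and appropriately brief.
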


%%%%%%%%%%%%%%%%%%%%%%%%%%%%%%%%%%%%%%%%%%%%%%%%%%%%%%%%%%%%%%%%%%%%%%%%%%%%%%%%%%%%%%%%%%%%%%%%%%%%%%%%%%%%%%%%%%%%%%%%%%%%%%%%%%%%%%%%%%%%%%%%%%%%%%%%%%%%%%%%%%%%%%%%%%%%%%%%%%%%%%%%%%%%%%%%%%%%%%%%%%%%%%%%%%%%%%%%%%%%%%%%%%%%%%%%%%%%%%%%%%%%%%%%%%%%%%%%%%%%%%%%%%%%%%%%%%%%%%%%%%%%%%%%%%%%%%%%%%%%%%%%%%%%%%%%%%%%%%%%

\section{The duality theorems}\label{section3}

Let $G$ be a graph. An orientation of $G$ is an assignment of direction to each edge of $G$. There are two important classes of orientations given in the following. An orientation $\mathfrak{o}$ is called an \emph{acyclic orientation} if there is no directed cycle in $\mathfrak{o}$. An orientation $\mathfrak{o}$ is called a \emph{totally cyclic orientation} if for each edge $e$, there exists a directed cycle in $\mathfrak{o}$ containing $e$. Since the definition of orientation, acyclic orientation, and totally cyclic orientation do not depend on the embedding, we shall define those for graphs on surfaces in the same way.

From now on, every surface is assumed to be oriented. Let $G$ be a graph on $\Sigma$ and $\mathfrak{o}$ be an orientation of $G$. The \emph{dual orientation} $\mathfrak{o}^{*}$ of $\mathfrak{o}$ is an orientation of $G^{*}$ so that each pair $(\overrightarrow{e},\overrightarrow{e}^{*})$ of oriented edges of $G$ and $G^*$ forms a positively oriented ordered pair with respect to the given orientation of the surface $\Sigma$. Figure \ref{figure:dual orientation} describes an orientation and its dual orientation of a graph (on a sphere). As mentioned in the introduction, the number of acyclic orientations of a planar graph $G$ equals to the number of totally cyclic orientations of the dual graph $G^*$. Moreover, the operation of dual orientation gives a bijection between those objects.

\begin{figure}[H]
\begin{tikzpicture}
[scale=0.8,every node/.style={transform shape}] 
\fill (0,1) circle (2pt); 
\fill (0,-1) circle (2pt); 
\fill ({sqrt(3)},0) circle (2pt); 
\fill (-{sqrt(3)},0) circle (2pt); 
%------------------------------ 
\begin{scope}
    [decoration={
    markings,
    mark=at position 1 with {\arrow{latex}}}
    ] 
\draw[postaction={decorate}] (0,1) -- (-{sqrt(3)},0); 
\draw[postaction={decorate}] (-{sqrt(3)},0) -- (0,-1); 
\draw[postaction={decorate}] (0,1) -- (0,-1); 
\draw[postaction={decorate}] ({sqrt(3)},0) --(0,1); 
\draw[postaction={decorate}] ({sqrt(3)},0) --(0,-1);
\end{scope}

\fill[red] (-0.5,0) circle (2pt);
\fill[red] (0.5,0) circle (2pt);
\fill[red] (0,{1+sqrt(5)/2}) circle (2pt);

\begin{scope}
    [decoration={
    markings,
    mark=at position 1 with {\arrow{latex}}}
    ] 
\draw[postaction={decorate}, red, thick] (-0.5,0) -- (0.5,0);

\tikzset{
    partial ellipse/.style args={#1:#2:#3}{
        insert path={+ (#1:#3) arc (#1:#2:#3)}
    }
}
\draw[thick, red, -latex] (0,0.5) [partial ellipse=90:-20:0.5 and {sqrt(5)/2+0.5}];
\draw[thick, red, -latex] (0,0.5) [partial ellipse=90:200:0.5 and {sqrt(5)/2+0.5}];
\draw[thick, red, -latex] (0,0) [partial ellipse=0:90:2.5 and {1+sqrt(5)/2}];
\draw[thick, red] (-2.5,0) arc (180:90: 2.5 and {1+sqrt(5)/2});
\draw[thick, red, -latex] (-1.5,0) [partial ellipse=-180:0:1 and 0.5];
\draw[thick, red] (0.5,0) arc (-180:0: 1 and 0.5);

\end{scope}

\end{tikzpicture}
    \caption{An orientation of a graph (black) and its dual orientation of a dual graph (red).}
    \label{figure:dual orientation}
\end{figure}

\begin{thm} \cite{Noy01}
For a planar graph $G$, the map $AO(G) \to TCO(G^{*})$ sending $\mathfrak{o}$ to $\mathfrak{o}^{*}$ is bijective.
\end{thm}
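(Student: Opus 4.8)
The plan is to show that the dual-orientation map $\mathfrak{o}\mapsto\mathfrak{o}^*$ is a well-defined map $AO(G)\to TCO(G^*)$ and then exhibit an inverse. Since the dual operation is an involution on planar graphs (i.e. $(G^*)^*=G$ with $(\mathfrak{o}^*)^*=\mathfrak{o}$, using that rotating a positively oriented pair twice returns the original pair), it suffices to prove the single implication: \emph{if $\mathfrak{o}$ is acyclic on $G$, then $\mathfrak{o}^*$ is totally cyclic on $G^*$}. Applying this implication to $G^*$ and $\mathfrak{o}^*$ gives the reverse inclusion, and the two together force both maps to be mutually inverse bijections.

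First I would recall the classical planar fact underlying everything: for a planar graph $G$, a subset $S\subseteq E$ is the edge set of a cycle in $G$ if and only if the corresponding dual edges $S^*\subseteq E(G^*)$ form a minimal edge cut (bond) in $G^*$, and dually. More precisely, a directed cycle $\mathcal C$ in $\mathfrak o$ corresponds under duality to a directed bond in $\mathfrak o^*$: if $\mathcal C$ bounds a region, the dual edges crossing $\mathcal C$ all point from one side of the cut to the other because the ``rotate by $90^\circ$'' rule is consistent along the cycle. Conversely a directed edge cut in $\mathfrak o^*$ comes from a directed closed walk, hence a directed cycle, in $\mathfrak o$.

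The core of the argument is then: \textbf{(a)} $\mathfrak o$ has no directed cycle $\iff$ $\mathfrak o^*$ has no directed bond; and \textbf{(b)} $\mathfrak o^*$ has no directed bond $\iff$ $\mathfrak o^*$ is totally cyclic. For \textbf{(b)}, the standard characterization is that an orientation is totally cyclic precisely when it admits no directed edge cut (equivalently, no ``coboundary'' edge whose removal disconnects a part with all edges pointing out): an edge $e$ lies on no directed cycle iff $e$ lies in a directed cut, by considering the set of vertices reachable from the head of $e$. For \textbf{(a)}, I would use the correspondence from the previous paragraph: a directed cycle in $\mathfrak o$ maps to a directed bond in $\mathfrak o^*$, and since $(\mathfrak o^*)^*=\mathfrak o$, a directed bond in $\mathfrak o^*$ maps back to a directed cycle in $\mathfrak o$; so the two are in bijection, and one is empty iff the other is. Chaining \textbf{(a)} and \textbf{(b)} gives ``$\mathfrak o$ acyclic $\iff$ $\mathfrak o^*$ totally cyclic,'' which yields both that the map lands in $TCO(G^*)$ and, via the involution, surjectivity; injectivity is immediate since $\mathfrak o\mapsto\mathfrak o^*$ is invertible edge-by-edge.

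The main obstacle I expect is making the cycle--bond duality \emph{compatible with orientations} fully rigorous: one must check that the $90^\circ$-rotation rule assigns consistent directions, i.e. that a coherently directed cycle in $G$ really does produce a coherently directed cut in $G^*$ (and not a cut with edges pointing both ways). This is a local check at each vertex of $G^*$ lying inside the region bounded by the cycle, using planarity (Jordan curve theorem) to speak of ``inside'' and ``outside,'' together with the fixed orientation of the sphere that makes $\mathfrak o^*$ well-defined. Once this compatibility is established, the rest is the routine graph-theoretic dictionary between directed cycles and directed cuts.
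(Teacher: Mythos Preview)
Your proposal is correct. The paper does not give its own proof of this cited result, but your argument is exactly the planar specialization of the paper's proof of the generalization Theorem~\ref{duality of local AO and TCO}: the characterization ``totally cyclic $\iff$ no coherently oriented cut set'' is Lemma~\ref{lem tco characterization}, and your directed cycle--bond correspondence is the planar case of the paper's observation that $\partial F'\subset E(G)$ is dual to the cut set $\partial V'\subset E(G^*)$ with coherent orientations matching under $\mathfrak o\mapsto\mathfrak o^*$.
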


Also as pointed out, the duality between acyclic orientations and totally cyclic orientations does not hold for graphs on surfaces, in general. Then it is natural to ask how to characterize the `dual orientation' of acyclic orientations or totally cyclic orientations. In this section, we introduce boundary acyclic orientations and totally bi-walkable orientations and provide two duality theorems involving these orientations.

\subsection{The dual of totally cyclic orientations}
Let $G=(V,E,F)$ be a graph on a surface. For a subset $F'\subseteq F$, the boundary $\partial F'$ of $F'$ is the set of edges having two distinct adjacent faces only one of which belongs to $F'$. For example in Figure \ref{figure:kite} for $F'=\{f_1,f_3\}$, $\partial F'=\{ e_3,e_4,e_5,e_6 \}$. Since two adjacent faces of $e_1$ are all in $F'$, it is not contained in $\partial F'$. A subset of $E$ is called a \emph{boundary} if it is a boundary of a subset of $F$. For a boundary $\partial F'$, there is a natural orientation $\overrightarrow{\partial F'}$: Each edge in $\partial F'$ is given a direction induced by the orientation of its adjacent face $f\in F'$ as a submanifold of $\Sigma$. 

Let $\mathfrak{o}$ be an orientation of in $G$. Then $\overrightarrow{\partial_\mathfrak{o} F'}$ is an element of $\{0,\pm1\}^E$ given by the following:
\begin{align*}
 \overrightarrow{\partial_\mathfrak{o} F'}(e) &=
  \begin{cases}
    0,       & \text{if $e$ is not in $\partial F'$}\\
    1,       & \text{if the orientation of $e$ in $\overrightarrow{\partial F'}$ agrees with $\mathfrak{o}$}\\
   -1,       & \text{if the orientation of $e$ in $\overrightarrow{\partial F'}$ disagrees with $\mathfrak{o}$}
  \end{cases}
\end{align*}
For example, let $\mathfrak{o}$ be an orientation given in Figure \ref{figure:kite} and the torus is oriented counter-clockwise. Then we have $\overrightarrow{\partial_\mathfrak{o} \{f_1, f_3\}}=(0,0,1,-1,1,-1)$. We abuse the notation $\partial f$, $\overrightarrow{\partial f}$ and $\overrightarrow{\partial_\mathfrak{o} f}$ when $F'=\{f\}$. Now we are ready to define the \emph{boundary acyclic orientation}.

\begin{figure}[H]
\begin{tikzpicture}
[scale=1.5,every node/.style={transform shape}] 
\fill (1,0) circle (0.5pt); 
\fill (1,2) circle (0.5pt); 
\fill (0,1) circle (0.5pt);
\fill (-1,0) circle (0.5pt); 
\fill (-1,2) circle (0.5pt); 
%------------------------------ 
\begin{scope}
    [decoration={
    markings,
    mark=at position 1 with {\arrow[scale=1.5]{latex}}}
    ] 
\draw[postaction={decorate}] (-1,0) -- (1,0); 
\draw[postaction={decorate}] (-1,0) -- (-1,2); 
\draw[postaction={decorate}] (1,0) -- (1,2); 
\draw[postaction={decorate}] (-1,2) -- (1,2); 
\draw[postaction={decorate}] (-1,0) -- (0,1);
\draw[postaction={decorate}] (-1,2) -- (0,1);
\draw[postaction={decorate}] (1,0) -- (0,1);
\draw[postaction={decorate}] (1,2) -- (0,1);

\end{scope}

\node at (0,2.2) {\tiny{$e_1$}};
\node at (0,-0.2) {\tiny{$e_1$}};
\node at (-1.3,1) {\tiny{$e_2$}};
\node at (1.3,1) {\tiny{$e_2$}};
\node at (-0.5,1.5) {\tiny{$e_3$}};
\node at (-0.5,0.5) {\tiny{$e_4$}};
\node at (0.5,1.5) {\tiny{$e_6$}};
\node at (0.5,0.5) {\tiny{$e_5$}};
\node at (0,1.65) {\tiny{$f_1$}};
\node at (0,0.35) {\tiny{$f_3$}};
\node at (-0.65,1) {\tiny{$f_2$}};
\node at (0.65,1) {\tiny{$f_4$}};

\draw [thick] (0,-.1) -- (0,.1);
\draw [thick] (0,1.9) -- (0,2.1);
\draw [thick] (-1.1,1.05) -- (-0.9,1.05);
\draw [thick] (-1.1,.95) -- (-0.9,.95);
\draw [thick] (0.9,1.05) -- (1.1,1.05);
\draw [thick] (0.9,0.95) -- (1.1,.95);

\end{tikzpicture}
    \caption{An oriented graph on a torus.}
    \label{figure:kite}
\end{figure}
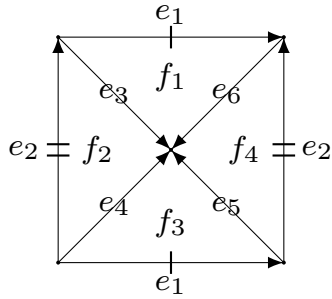

\begin{defn}
Let $G$ be a graph on a surface and let $\mathfrak{o}$ be an orientation of $G$. A boundary of $F'$ is said to be \emph{coherently oriented} (with respect to $\mathfrak{o}$) if the nonzero entries in $\overrightarrow{\partial_\mathfrak{o} F'}$ are all $+1$ or are all $-1$.

An orientation $\mathfrak{o}$ on $G$ is called \emph{boundary acyclic orientation} if there is no coherently oriented boundary in $G$. The set of boundary acyclic orientations is denoted by $BAO(G)$.
\end{defn}

\begin{rem}\label{Remark: boundary is a 'boundary'}
The boundary of $F'$ can be considered as the boundary of a surface $\Sigma'$ naturally induced from $F'$ by the following description. Let $\GG=(V,E,F)$ be the corresponding ribbon graph of $G$. For a subset $F'\subseteq F$, consider the adjacent ribbon edges 
$$E'=\{e \in E : \text{$e$ is adjacent to a face in $F'$} \}$$
and the adjacent ribbon vertices 
$$V'=\{v \in V : \text{$v$ is adjacent to an edge in $E'$} \}$$
of $F'$.
Then $\Sigma'= F' \cup E' \cup V'$ forms a compact surface with boundary and embedded in $\Sigma$, and the boundary $\partial \Sigma'$ of $\Sigma'$ is homeomorphic to a union of disjoint circles.
The boundary $\partial \Sigma'$ consists of some portion of boundaries of ribbon edges and vertices of the ribbon graph. The edges of $G$ whose corresponding ribbon edges meet with $\partial \Sigma'$ is the boundary $\partial F'$ of $F'$. 

From the above argument, $\partial F'$ is a union of some cycles of $G$.
Therefore an acyclic orientation is a boundary acyclic orientation and $AO(G) \subseteq BAO(G)$.
\end{rem}

To give a duality between boundary acyclic orientations and totally cyclic orientations, we present another characterization for totally cyclic orientations. A \emph{cut} of a graph $G$ is a partition $(V_{1}, V_{2})$ of the vertex set $V$ of $G$ into two nonempty disjoint subsets. A cut determines a \emph{cut set}, the set of edges that have one endpoint in each subset of the partition. We abbreviate a non-empty subset of $E$ as a cut set if it is a cut set of a cut of $G$.
An orientation of a cut set is called \emph{coherently oriented} if its elements are oriented from one part toward the other part of the cut.
The following lemma is a well-known characterization of totally cyclic orientations. The argument of the proof will be used in the proof of Lemma~\ref{boundary acyclic orientation}.

\begin{lem}\label{lem tco characterization} 
Let $\mathfrak{o}$ be an orientation of a graph $G$.
\begin{enumerate}[label = {\rm (\arabic*)}]
    \item The orientation $\mathfrak{o}$ is a totally cyclic orientation if and only if $\mathfrak{o}$ has no coherently oriented cut set. 
    \item Suppose $G$ is connected. The orientation $\mathfrak{o}$ is a totally cyclic orientation if and only if $\mathfrak{o}$ is strongly connected, i.e. every vertex is reachable from every other vertex by a directed path.
\end{enumerate}
% Let $\sigma$ be an orientation of a graph $G$. Then $\sigma$ is a totally cyclic orientation if and only if $\sigma$ has no coherently oriented cut set. 
% In particular, if $G$ is connected then $\sigma$ is a totally cyclic orientation if and only if $\sigma$ is strongly connected, i.e. every vertex is reachable from every other vertex by a directed path.
\end{lem}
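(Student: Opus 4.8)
The plan is to prove both equivalences by standard arguments about directed cuts and directed paths, being careful that the second part genuinely uses connectedness.

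\textbf{Part (1).} I would prove the contrapositive in both directions. First, suppose $\mathfrak{o}$ has a coherently oriented cut set, say the cut set of a cut $(V_1,V_2)$ with every edge directed from $V_1$ to $V_2$. Pick any edge $e$ in this cut set; I claim no directed cycle contains $e$. Indeed, a directed cycle through $e$ would have to return from $V_2$ to $V_1$, hence it would traverse some edge of the cut set in the $V_2\to V_1$ direction, contradicting coherence. So $\mathfrak{o}$ is not totally cyclic. Conversely, suppose $\mathfrak{o}$ is not totally cyclic, so there is an edge $e=uv$ (directed, say, $u\to v$) lying in no directed cycle. Let $V_2$ be the set of vertices reachable from $v$ by a directed path (including $v$), and $V_1=V\setminus V_2$. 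Then $u\notin V_2$: otherwise there is a directed path from $v$ to $u$, which together with $e$ forms a directed cycle through $e$. Hence both parts are nonempty, so $(V_1,V_2)$ is a cut, and by construction no edge is directed from $V_2$ to $V_1$ (if $w\to w'$ with $w\in V_2$ then $w'$ is reachable from $v$, so $w'\in V_2$). Thus its cut set is coherently oriented.

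\textbf{Part (2).} Assume $G$ is connected. If $\mathfrak{o}$ is strongly connected, then for every edge $e=uv$ there is a directed path from $v$ back to $u$, which together with $e$ gives a directed cycle through $e$; hence $\mathfrak{o}$ is totally cyclic. Conversely, suppose $\mathfrak{o}$ is totally cyclic. Fix a vertex $v_0$ and let $R$ be the set of vertices reachable from $v_0$ by a directed path. If $R\neq V$, then since $G$ is connected there is an edge $e$ with one endpoint in $R$ and one in $V\setminus R$; by reachability this edge must be directed from $V\setminus R$ into $R$ (an edge out of $R$ would enlarge $R$). Applying part (1) to the cut $(V\setminus R, R)$ — or rather observing directly — the set of such edges contains an edge lying in no directed cycle, contradicting total cyclicity. (More carefully: pick such an edge $e=wr$ with $w\notin R$, $r\in R$; a directed cycle through $e$ would pass through $w$, but $w$ is then reachable from $r$, hence from $v_0$, contradiction.) So $R=V$: every vertex is reachable from $v_0$. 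The same argument with all edge directions reversed shows $v_0$ is reachable from every vertex, and composing paths through $v_0$ shows $\mathfrak{o}$ is strongly connected.

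\textbf{Main obstacle.} The arguments here are entirely routine; the only point requiring care is the bookkeeping in part (2) to make sure we never silently assume $G$ has no isolated vertices or that cut sets are nonempty in degenerate cases — and, as the remark before the lemma signals, to phrase the ``reachable set'' construction so that it transfers verbatim to the boundary setting of Lemma~\ref{boundary acyclic orientation}. I would therefore state the reachability construction as a standalone paragraph so it can be cited later rather than repeated.
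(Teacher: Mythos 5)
Your proposal is correct and follows essentially the same reachability-set argument as the paper for both parts: in part (1) you build the cut from the set of vertices reachable from the head of the offending edge, and in part (2) you again use a reachability set together with connectedness to force a coherently oriented cut. The only cosmetic difference is that in part (2) the paper takes two arbitrary vertices $v,w$ and shows $w$ is reachable from $v$ directly, whereas you fix a single $v_0$, prove reachability to and from $v_0$ (via edge reversal), and then compose paths; both routes are standard and equally valid.
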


\begin{proof}
(1) Suppose that there is a coherently oriented cut set $S$ of $G$. Then any edge $\overrightarrow{e} \in S$, cannot be contained in a directed cycle.

Conversely, suppose there is an oriented edge $\overrightarrow{e}=(v,w)$ oriented toward a vertex $w$ from $v$, which is not contained in any directed cycle. Let $H$ be the subset of $V(G)$ consisting of the vertices reachable by directed paths of $\mathfrak{o}$ starting from $w$, including $w$ itself. Since there is no directed cycle containing $\overrightarrow{e}$, $v$ is in $V(G) \setminus H$. If an edge has one endpoint in $H$ and the other point in $V(G) \setminus H$, it should be oriented toward $H$ by the construction of $H$. Let $S$ be the set of all such edges. Then $S$ is the coherently oriented edge cut set which separates $H$ and $V(G) \setminus H$.

(2) The if part is clear. To prove the only if part, let $v, w$ be arbitrary vertices of $G$ and $K$ be the set of all vertices reachable by a directed path of $\mathfrak{o}$ starting from $v$.
For the sake of contradiction, suppose $w \notin K$. The set $E'$ of edges whose two endpoints are in $K$ and $V(G) \setminus K$ is a non-empty cut set since $G$ is connected.
By the construction of $K$, the edges in $E'$ should be all oriented toward $K$. Then any edge $e\in E'$ cannot be contained in a directed cycle.
\end{proof}

Now we are ready to prove a duality theorem between the boundary acyclic orientations and the totally cyclic orientations.

\begin{thm} \label{duality of local AO and TCO}
Let $G$ be a graph on a surface.
The map $BAO(G) \to TCO(G^{*})$ sending $\mathfrak{o}$ to $\mathfrak{o}^{*}$ is bijective.
\end{thm}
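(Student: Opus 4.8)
The plan is to show that the duality map $\mathfrak{o}\mapsto\mathfrak{o}^{*}$ restricts to a bijection $BAO(G)\to TCO(G^{*})$, and since $\mathfrak{o}\mapsto\mathfrak{o}^{*}$ is already a bijection between all orientations of $G$ and all orientations of $G^{*}$, it suffices to prove that $\mathfrak{o}$ is a boundary acyclic orientation of $G$ if and only if $\mathfrak{o}^{*}$ is a totally cyclic orientation of $G^{*}$. By Lemma~\ref{lem tco characterization}(1), $\mathfrak{o}^{*}$ is totally cyclic iff it has no coherently oriented cut set. So the crux is to establish a correspondence between coherently oriented boundaries of $G$ (with respect to $\mathfrak{o}$) and coherently oriented cut sets of $G^{*}$ (with respect to $\mathfrak{o}^{*}$).

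The key structural step is the following translation under duality. Recall that in $G^{*}$ the faces of $G$ become the vertices. First I would show that for any subset $F'\subseteq F(G)$, the edge set $\partial F'$ of $G$ is exactly the cut set of $G^{*}$ determined by the vertex partition $(F', F\setminus F')$ — provided this partition is nontrivial — simply because an edge $e$ of $G$ lies in $\partial F'$ iff it has two distinct adjacent faces, exactly one of which is in $F'$, which is precisely the condition for the dual edge $e^{*}$ to join a vertex of $F'$ to a vertex of $F\setminus F'$ in $G^{*}$. (One must handle the degenerate cases: if $G$ is connected, $\partial F'=\varnothing$ forces $F'=\varnothing$ or $F'=F$, and these give no cut; in general one works component by component, since both the notion of boundary and of cut set are supported on a single connected component. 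Here I use that $\Sigma\setminus G$ being a disjoint union of disks means $G$ is connected on each component of $\Sigma$.) Conversely every cut set of $G^{*}$ arises this way from the vertex partition it induces. Next I would check that the two induced orientations match up: the natural orientation $\overrightarrow{\partial F'}$ of the boundary, coming from orienting $\Sigma$ and the faces of $F'$ as submanifolds, is carried by the dual-orientation rule $(\overrightarrow{e},\overrightarrow{e}^{*})$ positively oriented to the orientation of the cut set of $G^{*}$ pointing out of $F'$ (or into $F'$, consistently). Consequently $\overrightarrow{\partial_{\mathfrak{o}}F'}$ having all nonzero entries equal to $+1$ (resp.\ all $-1$) is equivalent to the cut set of $G^{*}$ being coherently oriented by $\mathfrak{o}^{*}$ from $F'$ to $F\setminus F'$ (resp.\ the reverse). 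This is really an unwinding of the definitions together with the standard local picture of duality at an edge, and it is where the ribbon-graph description in Remark~\ref{Remark: boundary is a 'boundary'} is convenient: $\partial F'$ is the ribbon-graph boundary of the subsurface $\Sigma'=F'\cup E'\cup V'$, whose orientation is inherited from $\Sigma$.

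Granting this dictionary, the equivalence is immediate: $\mathfrak{o}$ has a coherently oriented boundary $\partial F'$ iff $\mathfrak{o}^{*}$ has the coherently oriented cut set corresponding to the partition $(F',F\setminus F')$ of $V(G^{*})$, and ranging over all $F'$ (equivalently, over all cut sets of $G^{*}$) shows that $\mathfrak{o}$ is boundary acyclic iff $\mathfrak{o}^{*}$ has no coherently oriented cut set iff $\mathfrak{o}^{*}$ is totally cyclic. Combined with the bijectivity of $\mathfrak{o}\mapsto\mathfrak{o}^{*}$ on all orientations, this gives the claimed bijection $BAO(G)\to TCO(G^{*})$.

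I expect the main obstacle to be the careful verification that the \emph{orientations} correspond — i.e.\ that ``coherently oriented'' on the boundary side (defined via the submanifold orientation of faces) really translates to ``coherently oriented'' on the cut-set side (defined via ``all edges point from one part to the other'') — since this requires pinning down sign conventions for the dual orientation and for the induced boundary orientation and checking they are compatible edge by edge; the purely combinatorial part (that $\partial F'$ is the cut set dual to $(F',F\setminus F')$) is routine, as is the reduction to connected components.
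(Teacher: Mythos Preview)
Your proposal is correct and follows essentially the same route as the paper's proof: reduce to connected components, identify $\partial F'$ with the cut set of $G^{*}$ determined by the vertex partition $(F',F\setminus F')$, verify that ``coherently oriented'' on one side matches ``coherently oriented'' on the other under the dual-orientation rule, and conclude via Lemma~\ref{lem tco characterization}(1). The paper's proof is simply a terser version of exactly this argument; your extra care about the orientation conventions and degenerate cases is appropriate but not a different idea.
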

\begin{proof}
Let $G=(V,E,F)$ be a graph on a surface and $G^{*}=(F^{*},E^{*},V^{*})$ be the dual graph. We may assume that $G$ is connected. Let $V' \subset F^{*}$ be the dual of a proper subset $F' \subset F$ and $\partial V'$ be the dual of the boundary $\partial F'$.
Then $\partial V'$ is a cut set whose cut is $(V',V \setminus V')$.
Moreover, an orientation on $\partial F'$ is coherently oriented if and only if its dual orientation on $\partial V'$ is coherently oriented.
This shows that the map $\mathfrak{o} \mapsto \mathfrak{o}^{*} $ is a bijection between boundary acyclic orientations of $G$ and totally cyclic orientations of $G^*$, by Lemma \ref{lem tco characterization}.
\end{proof}

%%%%%%%%%%%%%%%%%%%%%%%%%%%%%%%%%%%%%%%%%%%%%%%%%%%%%%%%%%%%%%%%%%%%%%%%%%%%%%%%%%%%%%%%%%%%%%%%%%%%%%%%%%%%%%%%%%%%%%%%%%%%%%%%%%%%%%%%%%%%%%%%5

\subsection{The dual of acyclic orientations}

We turn our attention to presenting another duality theorem involving acyclic orientations. Let $G$ be a graph on a surface.
A \emph{cocycle} of $G$ is a sequence $(f_0, e_1, f_1,...,e_k,f_k=f_0)$ of non-repeating edges and faces of $G$ such that each consecutive face and edge are adjacent. We abbreviate a cocycle as its edge set. It is not hard to see that a cocycle is a dual of a cycle which explains the naming of cocycle. Topologically, a cocycle forms an annulus whose core is its dual cycle. So it is bounded by two circles. A cocycle is called \emph{coherently oriented} if the direction of each edge is from the same boundary circle toward the other. In Figure \ref{figure:kite}, $C=(f_2,e_4,f_3,e_5,f_4,e_2,f_2)$ is a cocycle. Since all of the edges of $C$ are directed from the circle formed by $e_1$ toward the circle formed by $e_3$ and $e_6$, $C$ is coherently oriented. 
A pair $(e_1, e_2)$ of edges of a cocycle $C$ is called \emph{coherently oriented with respect to $C$} if both $e_1$ and $e_2$ are directed from the same boundary circle to the other. 

A walk is a sequence $(v_0, e_1, v_1,...,e_k, v_k)$ of vertices and edges of $G$ such that each consecutive vertex and edge are adjacent. We abbreviate a walk as its edge set. A closed walk is a walk whose endpoint and starting point are the same, i.e. $v_k=v_0$. Note that edges in a walk may be repeated, while the edges in a cycle are never repeated. A directed walk is a walk with an orientation that is compatible with the sequence of edges for the walk. If $W$ is a directed walk and $C$ is a cocycle, we denote $C \cap W$ as the intersection of the undirected edge set of $W$ and $C$.
We are now ready to define the \emph{totally bi-walkable orientation} which will be shown to be the dual of the acyclic orientation.

\begin{defn}
A directed closed walk $W$ is called \emph{bidirectional} if it satisfies the following condition:
If $C$ is a cocycle such that $C \cap W \neq \emptyset$, then there is a pair $(e_1, e_2)$ of edges in $C \cap W$ that are not coherently oriented with respect to $C$.

An orientation $\mathfrak{o}$ of $G$ is called \emph{totally bi-walkable} if for every edge $e$, there exists a bidirectional closed walk $W_e$ containing $e$. Denote the set of totally bi-walkable orientations of $G$ by $TBO(G)$.
\end{defn}
It is easy to show that if there is a directed closed walk containing $\overrightarrow{e}$ then there is a directed cycle containing $\overrightarrow{e}$.
Therefore, $TBO(G)\subseteq TCO(G)$.

A cycle $\mathcal{C}$ of $G$ is called \emph{separating} if contracting $\mathcal{C}$ increases the number of components of the embedded surface, i.e., $c(G/\mathcal{C})=c(G)+1$. A cycle which is not separating is called \emph{non-separating}. In Figure \ref{figure:separating cycle}, two cycles of colored red are separating cycles and a cycle of colored blue is a non-separating cycle.
Accordingly, a cocycle has two types, the separating and the non-separating each of which is the dual of the separating and the non-separating cycle, respectively. We provide the following characterization of totally bi-walkable orientations, which is an analogue of the Lemma \ref{lem tco characterization}.

\begin{figure}
\begin{tikzpicture}
  \useasboundingbox (2,-1.5) rectangle (3,1.5);
\draw[smooth] (0,1) to[out=30,in=150] (2,1) to[out=-30,in=210] (3,1) to[out=30,in=150] (5,1) to[out=-30,in=30] (5,-1) to[out=210,in=-30] (3,-1) to[out=150,in=30] (2,-1) to[out=210,in=-30] (0,-1) to[out=150,in=-150] (0,1);
\draw[smooth] (0.4,0.1) .. controls (0.8,-0.25) and (1.2,-0.25) .. (1.6,0.1);  
\draw[smooth] (0.5,0) .. controls (0.8,0.2) and (1.2,0.2) .. (1.5,0);
\draw[smooth] (3.4,0.1) .. controls (3.8,-0.25) and (4.2,-0.25) .. (4.6,0.1);
\draw[smooth] (3.5,0) .. controls (3.8,0.2) and (4.2,0.2) .. (4.5,0);

\begin{scope}
    \clip (-3,-1) rectangle (2.5,2.5);
    \draw[thick,red] (2.5,0) ellipse (0.3 and 0.85);
\end{scope}

\begin{scope}
    \clip (2.5,-1) rectangle (5,2.5);
    \draw[thick,red,dashed] (2.5,0) ellipse (0.3 and 0.85);
\end{scope}

\begin{scope}
    \clip (1,-2) rectangle (2,2.5);
    \draw[thick,blue,dashed] (1,-0.71) ellipse (0.2 and 0.56);
\end{scope}
\begin{scope}
    \clip (-1,-2) rectangle (1,2.5);
    \draw[thick,blue] (1,-0.71) ellipse (0.2 and 0.56);
\end{scope}

\draw [thick,red] (4,-0.75) ellipse (0.4 and 0.2);

\end{tikzpicture}
% &
% \begin{tikzpicture}
%   \useasboundingbox (-1,-1.5) rectangle (1,1.5);
% \end{tikzpicture}
% &
% \begin{tikzpicture}
%   \useasboundingbox (-3,-1.5) rectangle (3,1.5);

%   \draw (0,0) ellipse (3 and 1.5);
%   \begin{scope}
%     \clip (0,-1.8) ellipse (3 and 2.5);
%     \draw (0,2.2) ellipse (3 and 2.5);
%   \end{scope}
%   \begin{scope}
%     \clip (0,2.2) ellipse (3 and 2.5);
%     \draw (0,-2.2) ellipse (3 and 2.5);
%   \end{scope}
  
% %graph
% \tikzset{
%     partial ellipse/.style args={#1:#2:#3}{
%         insert path={+ (#1:#3) arc (#1:#2:#3)}
%     }
% }
% \draw[thick,-latex] (0,0) [partial ellipse=180:-50:2.3 and 1];
% \draw[thick,-latex] (0,0) [partial ellipse=-180:-51:2.3 and 1];
% \draw[thick, dashed] (-1,-1.4) arc (-90:90:.4 and 0.61);
% \draw[thick] (-1,-0.79) [partial ellipse=180:90:0.5 and 0.61];
% \draw[thick,-latex] (-1,-0.79) [partial ellipse=270:180:0.5 and 0.61];
% \draw[thick,-latex] (1.1,-0.79) [partial ellipse=-90:0:.4 and 0.61];
% \draw[thick] (1.1,-0.79) [partial ellipse=0:90:.4 and 0.61];
% \draw[thick, dashed] (1.1,-1.4) arc (270:90:.5 and .61);
% \node[circle,fill,inner sep=1pt] at (1.5,-0.75) {};
% \node[circle,fill,inner sep=1pt] at (-1.5,-0.75) {};
% \end{tikzpicture}
    \caption{Separating cycles (red) and a non-separating cycle (blue)}
    \label{figure:separating cycle}
\end{figure}
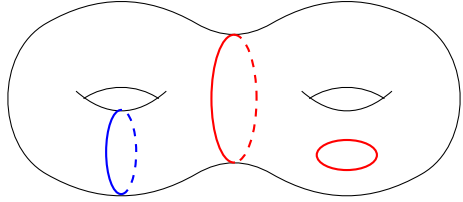

\begin{lem} \label{lem TBO charaterization}
Let $G$ be a graph on a surface. Let $\mathfrak{o}$ be an orientation of $G$. Then $\mathfrak{o}$ is a totally bi-walkable orientation if and only if there is neither coherently oriented cut set nor coherently oriented non-separating cocycle, i.e. there is no coherently oriented cocycle.
\end{lem}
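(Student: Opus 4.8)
The plan is to prove the two implications separately, working with the characterizations already available. For the forward direction, suppose $\mathfrak{o}$ is totally bi-walkable. Since $TBO(G)\subseteq TCO(G)$, Lemma~\ref{lem tco characterization}(1) immediately rules out coherently oriented cut sets, so the only thing to check is that there is no coherently oriented non-separating cocycle. I would argue by contradiction: if $C$ is such a cocycle and it is coherently oriented, pick any edge $e\in C$. By total bi-walkability there is a bidirectional closed walk $W_e$ containing $e$; since $e\in C\cap W_e$ we have $C\cap W_e\neq\emptyset$, so the definition of bidirectional produces a pair $(e_1,e_2)$ in $C\cap W_e$ that is \emph{not} coherently oriented with respect to $C$ — contradicting that $C$ is coherently oriented (every pair of its edges is coherently oriented). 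This direction is essentially a direct unwinding of definitions.

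For the converse, assume there is no coherently oriented cut set and no coherently oriented non-separating cocycle; I must produce, for each edge $e$, a bidirectional closed walk through $e$. First, the absence of a coherently oriented cut set gives, via Lemma~\ref{lem tco characterization}(1), that $\mathfrak{o}$ is totally cyclic, so each $e$ lies on some directed cycle $\mathcal{C}_e$; this is the candidate closed walk $W_e:=\mathcal{C}_e$. The remaining task is to check the bidirectional condition: for every cocycle $C$ with $C\cap \mathcal{C}_e\neq\emptyset$, there are two edges of $C\cap\mathcal{C}_e$ not coherently oriented w.r.t.\ $C$. I would split into the separating and non-separating cases for $C$. For a \textbf{separating} cocycle $C$: topologically $C$ is the dual of a separating cycle, so deleting (the dual picture of) $C$ disconnects the surface into two pieces; a directed cycle $\mathcal{C}_e$ that crosses $C$ must cross it an even number of times and, each time it leaves one side it must come back, so the crossings alternate in direction — hence among $C\cap\mathcal{C}_e$ there is a pair going "the two ways," which is exactly a non-coherently-oriented pair. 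For a \textbf{non-separating} cocycle $C$: here the crossing count need not be even, but by hypothesis $C$ is not coherently oriented as a whole; I would need to combine this with how $\mathcal{C}_e$ meets $C$. The cleanest route is probably to observe that if \emph{every} pair in $C\cap\mathcal{C}_e$ were coherently oriented w.r.t.\ $C$, then all edges of $\mathcal{C}_e$ meeting $C$ cross it "the same way," and then use a parity/homology argument on the closed curve $\mathcal{C}_e$ versus the annulus $C$ to force either a contradiction with $\mathcal{C}_e$ being a cycle or with $C$ not being coherently oriented. It may be cleaner to instead choose $W_e$ more carefully — not an arbitrary directed cycle, but one obtained by concatenating directed paths — so that the intersection pattern with any cocycle is controlled; I would keep that option open.

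The key technical device throughout is the "cut set $=$ dual of a cocycle / boundary" dictionary together with the intersection-parity fact: a directed cycle crosses a separating cocycle an even number of times with alternating orientations, analogous to the argument in the proof of Lemma~\ref{lem tco characterization}. I would state this crossing lemma explicitly once and reuse it.

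\textbf{Main obstacle.} The hard part is the non-separating cocycle case of the converse. For separating cocycles the even-crossing / alternation argument is forced by connectivity exactly as in the cut-set case of Lemma~\ref{lem tco characterization}, but for a non-separating cocycle $C$ a directed cycle can cross it an odd number of times, so one cannot get alternation for free; one must genuinely use the hypothesis that $C$ itself is not coherently oriented, and relate the global non-coherence of $C$ to the local behavior of $\mathcal{C}_e$ on $C$. Handling this likely requires either a homological argument on the surface (intersection number of $\mathcal{C}_e$ with the core cycle of $C$ modulo $2$) or a more clever choice of the witnessing closed walk $W_e$, and getting that argument airtight — in particular making sure $W_e$ can always be taken through a \emph{prescribed} edge $e$ — is where the real work lies.
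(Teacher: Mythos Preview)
Your forward direction and your treatment of separating cocycles in the converse are correct and essentially match the paper. The gap is precisely where you located it: the non-separating case. Your first instinct, taking $W_e$ to be a directed \emph{cycle} $\mathcal{C}_e$ through $e$, cannot work, and homology will not rescue it: a directed cycle can meet a non-separating cocycle $C$ in a single edge (think of a longitude crossing a meridian once on a torus), so $C\cap\mathcal{C}_e$ may contain no pair at all. The hypothesis only guarantees a non-coherent pair \emph{somewhere} in $C$, with no reason for it to lie on $\mathcal{C}_e$; knowing the mod-$2$ intersection number of $\mathcal{C}_e$ with the core of $C$ tells you nothing about which edges of $C$ the cycle actually uses.

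The paper uses the ``concatenating directed paths'' idea you gestured at, but with a concrete target you did not supply. Since there are only finitely many non-separating cocycles, fix once and for all a non-coherent pair $(e_{C,1},e_{C,2})$ for each such $C$ (this exists by hypothesis). The absence of coherently oriented cut sets makes $\mathfrak{o}$ totally cyclic, hence (for connected $G$) strongly connected by Lemma~\ref{lem tco characterization}(2); strong connectivity then lets you build a single directed closed walk $W_e$ that starts at $e$ and visits every $e_{C,i}$. This $W_e$ is bidirectional: for each non-separating $C$ the pre-loaded pair sits in $C\cap W_e$ by construction, and for each separating $C$ your even-crossing argument applies verbatim to closed \emph{walks} (a separating cocycle is a cut set, and any directed closed walk that enters one side must leave it again). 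That is the entire argument --- the missing idea was simply to load the witnesses into the walk globally rather than trying to extract them from an arbitrary cycle.
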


\begin{proof}
Only if part is clear. For the converse, it suffices to show the following by Lemma \ref{lem tco characterization}: If a totally cyclic orientation $\mathfrak{o}$ has no coherently oriented non-separating cocycle, then $\mathfrak{o}$ is a totally bi-walkable orientation.

We may assume that $G$ is connected. By the hypothesis, for each non-separating cocycle $C$ choose a pair of edges $(e_{C,1}, e_{C,2})$ which is not coherently oriented with respect to $C$. Since the graph is connected and $\mathfrak{o}$ is totally cyclic, $\mathfrak{o}$ is strongly connected by Lemma~\ref{lem tco characterization}.
Now we can conclude that there is a closed walk from any edge $e$ that contains all the edges $e_{C,1}, e_{C,2}$ for each non-separating cocycle $C$'s, which proves the claim.
\end{proof}

Now we present the duality between acyclic orientations and totally bi-walkable orientations, which completes an explanation of the bottom rectangle in Figure \ref{figure:cube}.

\begin{thm} \label{duality of AO and TBO}
Let $G$ be a graph on a surface.
The map $AO(G) \to TBO(G^{*})$ sending $\mathfrak{o}$ to $\mathfrak{o}^{*}$ is bijective.
\end{thm}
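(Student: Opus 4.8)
The plan is to combine the two characterizations already in hand: Lemma~\ref{lem tco characterization}(1) characterizes totally cyclic orientations via the absence of coherently oriented cut sets, and Lemma~\ref{lem TBO charaterization} characterizes totally bi-walkable orientations via the absence of coherently oriented cocycles (equivalently, no coherently oriented cut set and no coherently oriented non-separating cocycle). On the acyclic side, by Remark~\ref{Remark: boundary is a 'boundary'} every boundary $\partial F'$ is a union of cycles of $G$, so I will want a statement dual to ``$\mathfrak{o}$ is acyclic iff $\mathfrak{o}^*$ is totally bi-walkable.'' The natural route is to observe that $\mathfrak{o}$ is acyclic if and only if $\mathfrak{o}^*$ is totally cyclic \emph{and} contains no coherently oriented non-separating cocycle, and then invoke Lemma~\ref{lem TBO charaterization}. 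Since the dual operation on orientations is an involution and preserves the positively-oriented-pair condition, $\mathfrak{o}\mapsto\mathfrak{o}^*$ is automatically injective with inverse $\mathfrak{o}^*\mapsto(\mathfrak{o}^*)^*=\mathfrak{o}$ (as in Theorem~\ref{duality of local AO and TCO}), so the whole content is the bi-implication.

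First I would reduce to the connected case. Then I would translate the two obstructions on $G^*$ back to $G$ under duality. A directed cycle $\mathcal C$ in $\mathfrak{o}$ dualizes to a coherently oriented cocycle in $\mathfrak{o}^*$: the edges of the cocycle $\mathcal C^*$ cross $\mathcal C$ all in the same rotational sense, hence all point from one side of $\mathcal C^*$ to the other. Conversely, a coherently oriented cocycle $C$ of $G^*$ dualizes to a cycle $C^*$ of $G$ all of whose edges are crossed in the same sense, i.e. a directed cycle of $\mathfrak{o}$. Under Proposition~\ref{del-con dual} (duality exchanges deletion and contraction) a cocycle of $G^*$ is separating exactly when the corresponding cycle of $G$ is separating; but a directed cycle of $G$ is a genuine combinatorial obstruction to acyclicity regardless of whether it separates, so I actually need both: directed cycles coming from cut sets of $G^*$ and directed cycles coming from non-separating cocycles of $G^*$ together account for \emph{all} directed cycles of $G$. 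Concretely: $\mathfrak{o}$ has a directed cycle $\iff$ $\mathfrak{o}^*$ has a coherently oriented cocycle (separating or not) $\iff$ ($\mathfrak{o}^*$ has a coherently oriented cut set) or ($\mathfrak{o}^*$ has a coherently oriented non-separating cocycle). Negating, $\mathfrak{o}$ acyclic $\iff$ $\mathfrak{o}^*$ has no coherently oriented cocycle $\iff$ $\mathfrak{o}^*\in TBO(G^*)$ by Lemma~\ref{lem TBO charaterization}. Combined with injectivity and surjectivity of the involution, this gives the bijection.

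The step I expect to be the main obstacle is the careful verification that \emph{every} directed cycle of $\mathfrak{o}$ corresponds to a coherently oriented cocycle of $\mathfrak{o}^*$ and conversely — in particular checking the correspondence at the level of individual edge crossings, and confirming that the separating/non-separating dichotomy for cocycles of $G^*$ matches the one for cycles of $G$ via Proposition~\ref{del-con dual} (so that a ``coherently oriented cocycle of $G^*$'' in the sense of Lemma~\ref{lem TBO charaterization} really does capture an arbitrary directed cycle of $G$, not merely a separating one). A clean way to organize this is to note that a directed cycle $\mathcal C$ of $G$, its edge set together with the two ``sides'' locally available along $\mathcal C$, dualizes to an annular neighborhood whose core is $\mathcal C$ and whose crossing edges form precisely the cocycle $\mathcal C^*$; coherence of $\mathcal C$ as a directed cycle is literally the condition that all these crossing edges go from one boundary circle of the annulus to the other, which is the definition of a coherently oriented cocycle. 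Once this dictionary is pinned down the rest is a formal negation of Lemmas~\ref{lem tco characterization} and~\ref{lem TBO charaterization} together with the involutivity already used in Theorem~\ref{duality of local AO and TCO}.
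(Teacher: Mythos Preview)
Your proposal is correct and takes essentially the same approach as the paper. The paper's proof is a one-liner invoking Theorem~\ref{duality of local AO and TCO} and Lemma~\ref{lem TBO charaterization} together with the separating/non-separating dichotomy for cycles; you simply spell out in more detail the underlying correspondence (directed cycles of $G$ $\leftrightarrow$ coherently oriented cocycles of $G^*$) that the paper treats as immediate.
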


\begin{proof}
This is immediate from Theorem \ref{duality of local AO and TCO} and Lemma \ref{lem TBO charaterization} since we classified cycle type as separating or non-separating.
\end{proof}

\subsection{Applications}
A duality makes us convey one theorem to another one if the contents in the theorem have a notion of dual. We present some of the examples which are `dual' of the original theorems.

Let $G$ be a graph on a surface. For an orientation $\mathfrak{o}$ of $G$, a face $f$ of $G$ is called a \emph{cw-face} with respect to $\mathfrak{o}$ if $f$ is surrounded by clockwise oriented edges. Note that for a graph on a surface, the dual of a sink (or a source) vertex is a cw-face (or a ccw-face). There are theorems on the number of acyclic orientations with some restraints on sinks. Those can be `dualized' in terms of totally bi-walkable orientations as follow.

\begin{cor}
Let $G$ be a graph on a surface and $\tau(G^*;k)$ be the tension polynomial of the dual graph $G^*$. Then we have the following.
\begin{itemize}
    \item[(i)]
    $|\tau(G^*;-1)|$ is equal to the number of totally bi-walkable orientations of $G$.
    \item[(ii)]
    For a fixed face $f$ of $G$, the constant term of $\tau(G^*;k)$ is equal to the number of totally bi-walkable orientations of $G$ with unique cw-face $f$.
    \item[(iii)]
    Suppose $X_{G^*}=\sum_{\lambda} c_\lambda e_\lambda$, is the expansion of the chromatic symmetric function $X_{G^*}$ of $G^*$ in terms of elementary symmetric function $e_\lambda$. Let $\mathfrak{tbo}(G,j)$ be the number of totally bi-walkable orientations of $G$ with $j$ cw-faces. Then we have
    \[\mathfrak{tbo}(G,j)=\sum_{l(\lambda)=j}c_\lambda,\]
    where $l(\lambda)$ denotes the length of a partition $\lambda$ (See \cite{Sta95} for undefined terms).
    \item[(iv)]
    The generating function for totally bi-walkable orientations of $G$ along with their number of cw-faces is given by
    \begin{equation*}\label{sink generating function}
    \sum_{j\ge 0} \mathfrak{tbo}(G,j)q^j    =\sum_{S \subseteq E}(-1)^{|E(S)|-|V(G)|+c(S)}{\prod_{C \in \mathcal{C}(S)}{\left(1-(1-q)^{|V(C)|}\right)}},
    \end{equation*} 
where $\mathcal{C}(S)$ is the set of connected components of the spanning subgraph of $G^*$ induced by $S \subseteq E$ by regarding the vertices not belong to $S$ as singleton vertices in the subgraph.
\end{itemize}
\end{cor}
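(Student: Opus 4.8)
The plan is to derive each of the four statements by dualizing known results about acyclic orientations of $G^*$, using Theorem~\ref{duality of AO and TBO}, which gives a bijection $AO(G^*) \to TBO(G)$ via $\mathfrak{o} \mapsto \mathfrak{o}^*$, together with the observation that under this duality a sink vertex of $G^*$ corresponds to a cw-face of $G$ (and a source to a ccw-face). The key dictionary entries I would first record are: (a) $\tau(G^*;k)$ is the usual tension polynomial of the abstract graph $G^*$, so Stanley-type reciprocity applies directly to it; (b) the bijection of Theorem~\ref{duality of AO and TBO} restricts to a bijection between acyclic orientations of $G^*$ with a prescribed set of sinks and totally bi-walkable orientations of $G$ with the dual set as its set of cw-faces.

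For (i), I would invoke the classical reciprocity $|\tau(G^*;-1)| = |AO(G^*)|$ (this is the tension-polynomial specialization of Stanley's theorem, e.g. via $\tau(G^*;k)$ being, up to a monomial factor, the chromatic polynomial of $G^*$), and then apply the bijection $AO(G^*)\cong TBO(G)$. For (ii), I would use the refinement of Stanley's theorem (Greene--Zaslavsky) stating that, fixing a vertex $v$ of $G^*$, the number of acyclic orientations of $G^*$ with unique sink $v$ equals $|$constant term-type evaluation$|$ of the relevant polynomial; translated to $\tau$, the constant term of $\tau(G^*;k)$ counts acyclic orientations of $G^*$ with a single fixed sink, and dualizing sends these to totally bi-walkable orientations of $G$ with unique cw-face $f$ (the face dual to $v$). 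For (iii), I would cite Stanley's expansion of the chromatic symmetric function $X_{G^*} = \sum_\lambda c_\lambda e_\lambda$ and his result that $\sum_{l(\lambda)=j} c_\lambda$ equals the number of acyclic orientations of $G^*$ with exactly $j$ sinks; again applying the bijection converts $j$ sinks into $j$ cw-faces. For (iv), I would start from the known generating-function identity
\[
\sum_{j\ge 0} a_j(G^*)\, q^j = \sum_{S\subseteq E}(-1)^{|E(S)|-|V(G^*)|+c(S)}\prod_{C\in\mathcal{C}(S)}\left(1-(1-q)^{|V(C)|}\right),
\]
where $a_j(G^*)$ is the number of acyclic orientations of $G^*$ with $j$ sinks (this is the expansion coming from deletion--contraction / the broken-circuit or subgraph expansion of the chromatic polynomial evaluated appropriately), observe $|V(G^*)|=|F|$ and that $E=E(G)=E(G^*)$, and then substitute $a_j(G^*)=\mathfrak{tbo}(G,j)$ via the bijection.

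The only genuinely substantive point — and the one I'd be most careful about — is (ii)'s identification of "the constant term of $\tau(G^*;k)$" with "acyclic orientations with a single fixed sink": one must check the precise normalization of the tension polynomial $\tau(G^*;k)$ versus the chromatic polynomial $\chi(G^*;k)$, since $\tau$ counts nowhere-zero $\mathbb{Z}_k$-tensions and differs from $\chi$ by a factor of $k^{c(G^*)}$, so "constant term" must be interpreted as the lowest-degree coefficient of $\tau$, and one must confirm the Greene--Zaslavsky count is insensitive to which vertex is chosen as the sink (it is, by a standard transitivity argument on a connected graph). Everything else is a formal transport of structure along the bijection of Theorem~\ref{duality of AO and TBO}, once the sink/cw-face correspondence and the equality of edge sets and of $|V(G^*)|=|F|$ are spelled out; I would present these as short remarks and then quote \cite{Sta73}, \cite{Sta95}, and the relevant deletion--contraction identity for each item in turn.
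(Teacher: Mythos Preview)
Your proposal is correct and follows essentially the same approach as the paper: the paper's proof simply asserts that (i)--(iv) are the dual statements of \cite[Corollary~1.3]{Sta73}, \cite[Theorem~7.3]{GZ83}, \cite[Theorem~3.3]{Sta95}, and \cite[Theorem~3.2]{HJLOY19}, respectively, transported via the bijection $AO(G^*)\cong TBO(G)$ of Theorem~\ref{duality of AO and TBO} together with the sink/cw-face correspondence noted just before the corollary. Your extra care about the normalization of $\tau$ versus $\chi$ in (ii) and the identification $|V(G^*)|=|F|$ in (iv) is appropriate but does not represent a different strategy.
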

\begin{proof} 
(i), (ii), (iii) and (iv) are the dual statements of \cite[Corollary 1.3]{Sta73}, \cite[Theorem 7.3]{GZ83}, \cite[Theorem 3.3]{Sta95}, and \cite[Theorem 3.2]{HJLOY19}, respectively.
\end{proof}

% There are other similar expressions for the equation in (iv) that involve non-broken circuits, bond lattice, or acyclic orientations (or totally bi-walkable orientations) instead of spanning subgraphs \cite{HJLOY19}.
% Furthermore, one can also derive the generating function for the number of totally bi-walkable orientations with prescribed cw-faces \cite{HJLOY19}.

One can obtain other expressions for $\sum_{j\ge 0} \mathfrak{tbo}(G,j)q^j$ in terms of non-broken circuits, bond lattice, or acyclic orientations (or totally bi-walkable orientations) by combining the duality with \cite[Theorem 3.5]{HJLOY19}, \cite[Theorem 3.10]{HJLOY19}, or \cite[Theorem 3.14]{HJLOY19}, respectively.

%%%%%%%%%%%%%%%%%%%%%%%%%%%%%%%%%%%%%%%%%%%%%%%%%%%%%%%%%%%%%%%%%%%%%%%%%%%%%%%%%%%%%%%%%%%%%%%%%%%%%%%%%%%%%%%%%%%%%%%%%%%%%%%%%%%%%%%%%%%%%%%%%%%%%%%%%%%%%%%%%%%%%%%%%%%%%%%%%%%%%%%%%%%%%%%%%%%%%%%%%%%%%%%%%%%%%%%%%%%%%%%%%%%%%%%%%%%%%%%%%%%%%%%%%%%%%%%%%%%%%%%%%%%%%%%%%%%%%%%%%%%%%%%%%%%%%%%%%%%%%%%%%%%%%%%%%%%%%%%%%%%%%%%%%%%%%%%%%%%%%%%%%%%%%%%%%%%%%%%%%%%%%%%%%%%%%%%%%%%%%%%%%%%%%%%%%%%%%%%%

\section{The reciprocities}\label{section4}
\subsection{The reciprocity for local tensions}
%\RV{Also I think you should precise that the sum has -1 +1 coefficients according to the orientation of the edges.}해결
Let $G$ be a graph on a surface with an orientation $\sigma$. A \emph{local} $\mathbb{Z}_k$-tension of $G$ is an assignment $t:E\rightarrow\mathbb{Z}_k$ such that for each face $f$, the sum of values of edges in the boundary of $f$ with respect to $\sigma$, i.e.
\[\sum_{e\in \partial(f)}\overrightarrow{\partial_\sigma f}(e) t(e)=0.\]
One might notice that a local tension is the `1-cocycle' of the chain complex given by $G$ (the word cocycle is different from what we used in Section \ref{section3}). Let $\tau^{loc}(G;k)$ denotes the number of nowhere-zero local $\mathbb{Z}_k$-tension and call it as \emph{local tension polynomial} of $G$ \cite{GKRV18}. 

The purpose of this subsection is to prove a reciprocity theorem for the local tension polynomial. We recall a reciprocity theorem for flow polynomials of graphs \cite{BS12}.
\begin{thm}\cite[Theorem 2.2]{BS12}\label{BS12}
Let $G=(V,E)$ be an oriented graph and $k$ be a positive integer. For the flow polynomial $\phi(G;k)$ of $G$, $(-1)^{|E|-|V|+c(G)}\phi(G;-k)$ is equal to the number of pairs $(f,\mathfrak{o})$ where $f$ is a $\mathbb{Z}_k$-flow on $G$ and $\mathfrak{o}$ is a totally cyclic orientation of $G\sslash supp(f)$.
\end{thm}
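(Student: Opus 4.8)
The plan is to reduce the statement to the flow analogue of Stanley's reciprocity theorem by an inclusion--exclusion computation. First recall the standard subset expansion of the flow polynomial: writing $c(V,A)$ for the number of connected components of the spanning subgraph $(V,A)$ and $\xi(A):=|A|-|V|+c(V,A)$, inclusion--exclusion over the set of edges forced to carry flow $0$ gives
\[
\phi(G;k)=\sum_{A\subseteq E}(-1)^{|E\setminus A|}\bigl|\{\mathbb Z_k\text{-flows of }(V,A)\}\bigr|=\sum_{A\subseteq E}(-1)^{|E\setminus A|}k^{\xi(A)},
\]
the last equality using total unimodularity of the incidence matrix (which also makes the polynomiality manifest). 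Substituting $k\mapsto -k$ and collecting the signs, $(-1)^{|E|-|V|+c(G)}\phi(G;-k)$ becomes $\sum_{A\subseteq E}(-1)^{c(G)+c(V,A)}k^{\xi(A)}$. Now rewrite $k^{\xi(A)}=|\{\mathbb Z_k\text{-flows of }(V,A)\}|$ again and reorganise the resulting double sum over genuine $\mathbb Z_k$-flows $f$ of $G$: extension by zero is a bijection between flows $g$ of a subgraph $(V,A)$ and pairs $(A,f)$ with $f$ a flow of $G$ and $\operatorname{supp}(f)\subseteq A\subseteq E$, so
\[
(-1)^{|E|-|V|+c(G)}\phi(G;-k)=\sum_{f}(-1)^{c(G)}\!\!\sum_{\operatorname{supp}(f)\subseteq A\subseteq E}\!\!(-1)^{c(V,A)}.
\]

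Next I would evaluate the inner sum. Fix $f$, put $S=\operatorname{supp}(f)$ and $H=G\sslash S$; writing $A=S\sqcup B$ with $B\subseteq E\setminus S=E(H)$ and using $c(V,S\sqcup B)=c(V(H),B)$, the inner sum equals $\sum_{B\subseteq E(H)}(-1)^{c(V(H),B)}$. Since $c(G)=c(H)$, running the sign bookkeeping of the first paragraph in reverse at $k=1$ identifies $(-1)^{c(G)}\sum_{B}(-1)^{c(V(H),B)}$ with $(-1)^{\xi(H)}\phi_H(-1)$; and by the flow analogue of Stanley's reciprocity theorem, $(-1)^{\xi(H)}\phi_H(-1)=\#TCO(H)$ (equivalently $\#TCO(H)=T_H(0,2)$). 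Hence the inner contribution is $\#TCO(G\sslash\operatorname{supp}(f))$, and summing over all $\mathbb Z_k$-flows $f$ of $G$ gives exactly the number of pairs $(f,\mathfrak o)$ in the statement.

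The only non-formal ingredient here is the identity $(-1)^{\xi(H)}\phi_H(-1)=\#TCO(H)$; it is classical (the flow counterpart of the theorem of Stanley invoked for tensions in the Applications subsection), so I would simply cite it. The step I expect to need the most care is the re-indexing of the double sum: one must verify that extension by zero really is the claimed bijection and that contracting the support leaves the number of connected components unchanged, so that the passage to $G\sslash\operatorname{supp}(f)$ is legitimate.

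Alternatively, one can follow the original geometric route of Breuer and Sanyal: realise $\phi(G;k)$ as the open Ehrhart counting function of an inside-out polytope built from the flow space and the coordinate hyperplanes, so that its open chambers biject with $TCO(G)$; apply Ehrhart--Macdonald reciprocity in the inside-out form of Beck and Zaslavsky; and interpret the reciprocal value as a sum of lattice points weighted by the number of incident closed chambers, translated by the chamber$/TCO$ dictionary refined along the support of the associated flow. It is in this refinement that Lemma~\ref{lem tco characterization} would enter, as it lets one replace a totally cyclic orientation of $G$ prescribed on $S$ by a totally cyclic orientation of the contraction $G\sslash S$. In that approach the main obstacle is the correct construction of the inside-out polytope, i.e. handling the passage from modular ($\mathbb Z_k$) flows to honest integer points together with the half-open decompositions and the multiplicities on lower-dimensional strata.
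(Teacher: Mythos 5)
Your proof is correct. A preliminary remark: the paper does not actually prove Theorem~\ref{BS12} --- it quotes it from Breuer--Sanyal and reproves only its local-tension dual (Theorem~\ref{reciprocity for local tension}) via Ehrhart theory, explicitly choosing that over deducing the dual from Theorem~\ref{BS12}. So there is no in-paper proof to compare against, only the geometric template the paper follows elsewhere.

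Your route is genuinely different from that template and from Breuer--Sanyal's original argument. Where they realize $\phi(G;k)$ as a sum of open Ehrhart counting functions of polytopes cut out from the kernel of the incidence matrix, apply Ehrhart--Macdonald reciprocity, and decode lattice points of the closures into pairs $(f,\mathfrak{o})$ through a Greene--Zaslavsky region-to-orientation dictionary, you use the Whitney-rank subset expansion of $\phi$, substitute $-k$, reindex the resulting double sum by extension-by-zero of flows, and collapse the inner sum over $A\supseteq\operatorname{supp}(f)$ to the classical identity $(-1)^{|E(H)|-|V(H)|+c(H)}\phi(H;-1)=|TCO(H)|$ for $H=G\sslash\operatorname{supp}(f)$. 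The reindexing and sign bookkeeping are sound: extension by zero is a bijection, $c(V,S\sqcup B)=c(V(H),B)$ for $B\subseteq E\setminus S$, and $c(G\sslash S)=c(G)$ (using the abstract-graph convention that contracting a loop means deleting it, as the paper notes after Theorem~\ref{BS12}); and there is no circularity in invoking the $k=1$ instance, since that identity has an independent Tutte-polynomial proof, $|TCO(H)|=T_H(0,2)$. What each approach buys: your inclusion--exclusion proof is shorter and self-contained for the modular ($\mathbb{Z}_k$) case. The geometric route is heavier, but it is what the paper needs elsewhere, because the region-to-orientation bijection (realised for surfaces in Lemma~\ref{boundary acyclic orientation}) transports to the boundary-acyclic setting and upgrades with essentially no extra work to integral tensions via inside-out polytopes in Section~\ref{section4.2}, where a comparably clean Möbius-inversion expansion is not readily available.
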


In the above theorem, the symbol $\sslash$ denotes the contraction for abstract graphs, not the contraction for ribbon graphs (the contraction of a loop for an abstract graph is given by a deletion in the sense of a ribbon graph). For a graph on a surface $G=(V,E,F)$ and $A=\{e_1,\dots,e_l\}\subseteq E$, let us define $\ssetminus$ by \[G\ssetminus A=(\cdots((G*_1e_1)*_2e_2)\cdots*_le_l),\] where $*_k$ is a deletion if $e_k$ is not a coloop at the step and $*_k$ is a contraction if otherwise. By Proposition \ref{del-con dual}, one can see that the operation $\ssetminus$ is the dual operation of $\sslash$. Now we state the main theorem of this section, which is the dual version of Theorem \ref{BS12}.

\begin{thm}\label{reciprocity for local tension}
Let $G=(V,E,F)$ be an oriented graph on a surface and $k$ be a positive integer. Then $(-1)^{|E|-|F|+c(G)}\tau^{loc}(G;-k)$ counts pairs $(t,\mathfrak{o})$ where $t$ is a local $\mathbb{Z}_k$-tension on $G$ and $\mathfrak{o}$ is a boundary acyclic orientation of $G\ssetminus supp(t)$. 
\end{thm}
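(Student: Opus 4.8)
The plan is to dualize Theorem~\ref{BS12} using the tools assembled in Section~\ref{section3}, exactly as the local tension polynomial dualizes the flow polynomial and $\ssetminus$ dualizes $\sslash$. First I would recall that for a graph $G=(V,E,F)$ on a surface and its dual $G^* = (F^*, E^*, V^*)$, there is a natural weight-preserving bijection between local $\mathbb{Z}_k$-tensions of $G$ and $\mathbb{Z}_k$-flows of $G^*$: the condition $\sum_{e\in\partial f}\overrightarrow{\partial_\sigma f}(e)\, t(e) = 0$ for each face $f$ of $G$ is precisely the conservation law at the dual vertex $f^* \in F^*$ under the dual orientation $\sigma^*$. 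Consequently $\tau^{loc}(G;k) = \phi(G^*;k)$ as polynomials in $k$ (this is the top horizontal arrow of Figure~\ref{figure:cube}; one should note it already appears implicitly in \cite{DGMVZ05, GKRV18}, and I would cite it or give the one-line argument). In particular $\operatorname{supp}(t) \subseteq E$ corresponds to $\operatorname{supp}(t^*) \subseteq E^*$ under the edge correspondence $e \leftrightarrow e^*$.

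Next I would apply Theorem~\ref{BS12} to the oriented graph $G^*$ with the orientation $\sigma^*$. Since $|E(G^*)| = |E|$, $|V(G^*)| = |F|$, and $c(G^*) = c(G)$, it gives
\[
(-1)^{|E| - |F| + c(G)}\, \phi(G^*; -k) = (-1)^{|E(G^*)| - |V(G^*)| + c(G^*)}\, \phi(G^*; -k),
\]
which counts pairs $(g, \mathfrak{p})$ where $g$ is a $\mathbb{Z}_k$-flow on $G^*$ and $\mathfrak{p}$ is a totally cyclic orientation of $G^* \sslash \operatorname{supp}(g)$. Combining with $\tau^{loc}(G;-k) = \phi(G^*;-k)$, the left-hand side is already $(-1)^{|E|-|F|+c(G)}\tau^{loc}(G;-k)$, so it remains to set up a bijection between these pairs $(g,\mathfrak{p})$ and the pairs $(t, \mathfrak{o})$ in the statement. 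Under the tension--flow dual bijection, $g$ corresponds to a local $\mathbb{Z}_k$-tension $t$ on $G$ with $\operatorname{supp}(g)$ the set of dual edges of $\operatorname{supp}(t)$. For the orientation part I would use Proposition~\ref{del-con dual} to identify $G^* \sslash \operatorname{supp}(g)$ with $(G \ssetminus \operatorname{supp}(t))^*$: the abstract-graph contraction $\sslash$ on $G^*$ along a set of dual edges matches the operation $\ssetminus$ on $G$ along the corresponding primal edges, because a loop in $G^*$ (contracted trivially by $\sslash$, i.e.\ deleted) is dual to a coloop in $G$ (contracted by $\ssetminus$), and a non-loop of $G^*$ (genuinely contracted) is dual to a non-coloop of $G$ (deleted) — this is exactly the case analysis built into the definition of $\ssetminus$, combined with ${\GG^*}\setminus A = (\GG/A)^*$. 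Then, by Theorem~\ref{duality of local AO and TCO} applied to the graph on a surface $G\ssetminus\operatorname{supp}(t)$, the totally cyclic orientations $\mathfrak{p}$ of its dual $(G\ssetminus\operatorname{supp}(t))^* = G^*\sslash\operatorname{supp}(g)$ are in bijection (via $\mathfrak{o}\mapsto\mathfrak{o}^*$) with the boundary acyclic orientations $\mathfrak{o}$ of $G\ssetminus\operatorname{supp}(t)$. Chaining the three bijections $(t,\mathfrak{o}) \mapsto (t, \mathfrak{o}^*) \mapsto (g, \mathfrak{o}^*) = (g,\mathfrak{p})$ gives the result.

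The main obstacle I expect is the bookkeeping at the boundary between the abstract-graph world of Theorem~\ref{BS12} and the ribbon-graph world of Section~\ref{section3}: one must check carefully that $G^*\sslash\operatorname{supp}(g)$ really is, as a graph \emph{on a surface}, the dual of $G\ssetminus\operatorname{supp}(t)$, including that the surface on which $G\ssetminus\operatorname{supp}(t)$ lives is well-defined (it may change under $\ssetminus$, as noted after Proposition~\ref{del-con dual}) and that Theorem~\ref{duality of local AO and TCO} is being applied to the correct embedded graph. A secondary point to verify is that the sign exponents genuinely agree — i.e.\ that $|E|-|F|+c(G)$ is the correct "corank" exponent for $G^*$, which holds since duality fixes the edge set, swaps faces with vertices, and preserves the number of components (the surface is unchanged by $*$). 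Once these identifications are pinned down, the theorem follows formally by transporting Theorem~\ref{BS12} through the dual correspondence; no new analytic or Ehrhart-theoretic input is needed beyond what \cite{BS12} already supplies.
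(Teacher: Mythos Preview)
Your proposal is correct, and in fact the paper explicitly acknowledges this dualization argument as a valid proof immediately before the statement of the theorem: applying the local-tension/flow duality together with Theorem~\ref{duality of local AO and TCO} to Theorem~\ref{BS12} yields the result ``immediately.'' Your write-up supplies the details the paper omits, and your identification $(G\ssetminus\operatorname{supp}(t))^* = G^*\sslash\operatorname{supp}(g)$ is exactly the content of the sentence following the definition of~$\ssetminus$.

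However, the paper deliberately chooses \emph{not} to present this proof and instead redoes the Ehrhart argument of~\cite{BS12} directly on the primal side. It introduces the face matrix $D(G)$, expresses $\tau^{loc}(G;k)$ as a sum of Ehrhart polynomials of open polytopes $\mathring{P}_G(b)$, applies Ehrhart--Macdonald reciprocity, and then proves a key standalone lemma (Lemma~\ref{boundary acyclic orientation}) giving a bijection between the open regions of the hyperplane arrangement $\mathcal{H}_G = \ker D(G)\setminus\bigcup_e H_e$ and the boundary acyclic orientations of~$G$. What this buys is twofold: first, it produces an explicit primal bijection $\phi$ (and its inverse $\psi$) without passing through the dual; second, and more importantly, the machinery it develops---the face matrix, the arrangement $\mathcal{H}_G$, and especially Lemma~\ref{boundary acyclic orientation}---is reused verbatim in Section~\ref{section4.2} to prove the reciprocity for \emph{integral} local tensions via inside-out polytopes (Theorem~\ref{thm: reciprocity for integral local tension}), where no analogue of Theorem~\ref{BS12} is available to dualize. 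Your route is shorter for this one theorem but would leave you without those tools later.
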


Applying the duality between boundary acyclic orientation and totally cyclic orientation (Theorem \ref{duality of local AO and TCO}) and the duality between local tension and flow to Theorem \ref{BS12}, Theorem \ref{reciprocity for local tension} follows immediately. Although this approach provides one proof, we rather prove Theorem \ref{reciprocity for local tension} directly by following the scheme of \cite{BS12} using the Ehrhart theory. For details on the Ehrhart theory, see \cite{Sta86}.

Let $G=(V,E,F)$ be a graph on a surface with an orientation $\sigma$. We define the \emph{face matrix} $D(G)\in \{0,\pm 1\}^{{F}\times E}$ by
\[
D(G)_{(f,e)}=  \overrightarrow{\partial_\sigma f}(e).
\]
By identifying $\mathbb{Z}_k$ with $\{0,1,\dots,k-1\}$, we can identify a nowhere-zero local $\mathbb{Z}_k$-tension with $t\in \mathbb{Z}^E$ satisfying $0<t(e)<k$ and $D(G)\cdot t=k\cdot d$ for some $d\in \mathbb{Z}^{F}$. For $b\in\mathbb{Z}^F$, let
$$
\mathring{P}_G(b):=\{p\in \mathbb{R}^E: D(G)\cdot p=b, 0<p(e)<1, \text{ for all } e\in E \}.
$$
For a boundary $C$, there is a surface $\Sigma'\subseteq \Sigma$ which bounds $C$. Notice that the sum of values assigned to $C$ could be obtained by the sum of values assigned to the boundary of faces contained in $\Sigma'$. 
Therefore, the points in $\left(k\cdot \mathring{P}_{G}(b)\right) \cap \mathbb{Z}^E$ represent nowhere-zero local $\mathbb{Z}_k$-tensions, where $k\cdot P$ is a dilation of $P$ by $k$. Denote the set of \emph{feasible} $b$'s by $\mathcal{F}_{G}=\{ b\in \mathbb{Z}^F: \mathring{P}_{G}(b)\neq\emptyset \}$. We remark that the set $\mathcal{F}_{G}$ is finite and $\mathring{P}_{G}(b)$ and $\mathring{P}_{G}(b')$ are disjoint for distinct $b$ and $b'$ in $\mathcal{F}_G$.

\begin{ex}\label{Example: face matrix}
For a graph on a surface $G$ with an orientation $\sigma$ in Figure \ref{figure:graph on surface}, there are two faces $f_1$ and $f_2$, and four edges $e_1, e_2, e_3$ and $e_4$ with a given orientation.

\begin{figure}[h]
    \centering
    \includegraphics[width=0.5\textwidth]{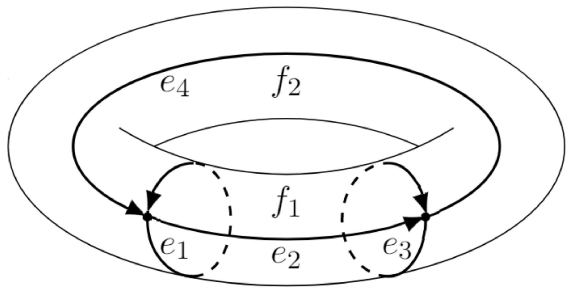}
    \caption{A graph on a surface $G$ with an orientation $\sigma$.}
    \label{figure:graph on surface}
\end{figure}

% \resizebox{0.4\textwidth}{!}{
% \begin{tikzpicture}
%   \useasboundingbox (-3,-1.5) rectangle (3,1.5);
%   \draw (0,0) ellipse (3 and 1.5);
%   \begin{scope}
%     \clip (0,-1.8) ellipse (3 and 2.5);
%     \draw (0,2.2) ellipse (3 and 2.5);
%   \end{scope}
%   \begin{scope}
%     \clip (0,2.2) ellipse (3 and 2.5);
%     \draw (0,-2.2) ellipse (3 and 2.5);
%   \end{scope}%graph

% \draw[thick] (0,0) ellipse (2.3 and 1);
% \draw[thick, dashed] (-1,-1.4) arc (-90:90:.4 and 0.61);
% \draw[thick] (-1,-1.4) arc (270:90:.5 and .61);
% \draw[thick] (1.1,-1.4) arc (-90:90:.4 and 0.61);
% \draw[thick, dashed] (1.1,-1.4) arc (270:90:.5 and .61);
% \node[circle,fill,inner sep=1pt] at (1.5,-0.75) {};
% \node[circle,fill,inner sep=1pt] at (-1.5,-0.75) {};
% \node at (0, -0.6) {\small{$f_1$}};
% \node at (0, 0.7) {\small$f_2$};
% \node at (0,-1.2) {\small$e_2$};
% \node at (-1.2,0.65) {\small$e_4$};
% \node at (-1.2,-1.1) {\small{$e_1$}};
% \node at (1.2,-1.1) {\small{$e_3$}};
% \end{tikzpicture}
% }
And we have the following face matrix of $G$.
\[D(G) =
  \begin{array}{@{}c@{}}
    \rowind{$f_1$} \\ \rowind{$f_2$}
  \end{array}
  \mathop{\left[
  \begin{array}{ *{5}{c} }
     \colind{1}{$e_1$}  &  \colind{0}{$e_2$}  &  \colind{1}{$e_3$}  & \colind{0}{$e_4$} \\
    -1  & 0 &  -1  & 0 \\
  \end{array}
  \right]}^{
  \begin{array}{@{}c@{}}
   \\ \mathstrut
  \end{array}}\]

\end{ex}

A \emph{(closed convex) polytope} $P$ is a convex hull of finite set of points (vertices) in $\mathbb{R}^l$. For a given polytope $P\subseteq \mathbb{R}^l$, the \emph{Ehrhart polynomial} is defined by $$\mathbf{Ehr}(P;k)=|(k\cdot P)\cap \mathbb{Z}^l|.$$  A polytope $P\subseteq \mathbb{R}^l$ is called \emph{lattice} (or \emph{integral}) if all of its vertices have integral coordinates and \emph{rational} if all of its vertices have rational coordinates. Ehrhart showed that for lattice polytope $P$, $\mathbf{Ehr}(P;k)$ is a polynomial, which verifies the name of Ehrhart polynomial. The following proposition is straightforward by the construction of $\mathring{P}_G(b)$ and $\mathcal{F}_G$.

\begin{prop}\label{local tension Ehrhart}
Let $G=(V,E,F)$ be an oriented graph on a surface. Then
$$\tau^{loc}(G;k)=\sum_{b\in\mathcal{F}_{G}}\mathbf{Ehr}(\mathring{P}_{G}(b);k).$$
\end{prop}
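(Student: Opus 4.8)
The plan is to unwind the definitions and show that the two sides count the same lattice points. Recall that a nowhere-zero local $\mathbb{Z}_k$-tension is, after identifying $\mathbb{Z}_k$ with $\{0,1,\dots,k-1\}$, an integer vector $t\in\mathbb{Z}^E$ with $0<t(e)<k$ for every $e$ and $D(G)\cdot t\equiv 0\pmod k$ coordinatewise; the latter congruence says precisely that $D(G)\cdot t=k\cdot d$ for some $d\in\mathbb{Z}^F$. Writing $p=t/k\in\mathbb{R}^E$, these conditions become $0<p(e)<1$ for all $e$ and $D(G)\cdot p=d$, i.e.\ $p\in\mathring P_G(d)$, so $t\in\bigl(k\cdot\mathring P_G(d)\bigr)\cap\mathbb{Z}^E$. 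Conversely any lattice point of $k\cdot\mathring P_G(b)$ for some $b$ arises this way: if $q\in\bigl(k\cdot\mathring P_G(b)\bigr)\cap\mathbb{Z}^E$ then $q/k\in\mathring P_G(b)$ forces $0<q(e)<k$ and $D(G)\cdot q=k\cdot b$, so $q$ reduces mod $k$ to a nowhere-zero local $\mathbb{Z}_k$-tension, and moreover $b=D(G)\cdot(q/k)$ is automatically a feasible value, i.e.\ $b\in\mathcal F_G$.

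First I would make the observation that each feasible $b$ lies in $\mathbb{Z}^F$ and that $\mathcal F_G$ is finite: indeed for any $p$ with $0\le p(e)\le 1$ the vector $D(G)\cdot p$ has entries bounded in absolute value by $\deg f$, so only finitely many integer vectors $b$ can occur, and one checks directly from $D(G)\cdot p=b$ that $b$ is integral whenever $p$ comes from an actual lattice tension — but in fact for the counting identity we only need that $\mathcal F_G$ is a finite subset of $\mathbb{Z}^F$, which follows from the boundedness just noted together with the fact that $\mathring P_G(b)\subseteq(0,1)^E$ is bounded. Second, I would note that the polytopes $\{\mathring P_G(b)\}_{b\in\mathcal F_G}$ are pairwise disjoint: if $p\in\mathring P_G(b)\cap\mathring P_G(b')$ then $b=D(G)\cdot p=b'$. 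Consequently the sets $\bigl(k\cdot\mathring P_G(b)\bigr)\cap\mathbb{Z}^E$ are pairwise disjoint as $b$ ranges over $\mathcal F_G$, so summing their cardinalities is the same as counting their union.

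Putting these together: the nowhere-zero local $\mathbb{Z}_k$-tensions of $G$ are in bijection with $\bigsqcup_{b\in\mathcal F_G}\bigl(k\cdot\mathring P_G(b)\bigr)\cap\mathbb{Z}^E$ via $t\mapsto t/k$ (with inverse $p\mapsto kp$), hence
\[
\tau^{loc}(G;k)=\sum_{b\in\mathcal F_G}\bigl|\bigl(k\cdot\mathring P_G(b)\bigr)\cap\mathbb{Z}^E\bigr|=\sum_{b\in\mathcal F_G}\mathbf{Ehr}\bigl(\mathring P_G(b);k\bigr),
\]
the last equality being the definition of the Ehrhart counting function (with the convention that $\mathbf{Ehr}$ of a half-open polytope counts the lattice points in the relatively open dilate). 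There is essentially no obstacle here — the statement is, as the paper says, straightforward — but the one point deserving care is the remark about boundaries: one must check that the face equations $D(G)\cdot p=b$ really do encode \emph{all} the constraints coming from boundaries $\partial F'$ of subsets $F'\subseteq F$, not just single faces. This is exactly the content of the sentence preceding the proposition: the boundary $\partial F'$ bounds a subsurface $\Sigma'$, and the signed sum of $t$ over $\partial F'$ equals $\sum_{f\in F'}\sum_{e\in\partial f}\overrightarrow{\partial_\sigma f}(e)t(e)$ since the edges interior to $\Sigma'$ cancel in pairs; hence imposing $D(G)\cdot t\equiv0$ on individual faces already forces the vanishing mod $k$ on every boundary, so $\mathring P_G(b)$ genuinely parametrizes local tensions and nothing is lost. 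With that verified, the identity is immediate.
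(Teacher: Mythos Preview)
Your proof is correct and follows exactly the approach the paper intends: the paper does not give a formal proof of this proposition but simply declares it ``straightforward by the construction of $\mathring{P}_G(b)$ and $\mathcal{F}_G$,'' having already set up (in the paragraph preceding the proposition) the identification of nowhere-zero local $\mathbb{Z}_k$-tensions with lattice points of the dilates $k\cdot\mathring{P}_G(b)$, the finiteness of $\mathcal{F}_G$, and the disjointness of the $\mathring{P}_G(b)$'s. Your final paragraph about boundaries $\partial F'$ is a slight overcaution---the definition of local tension involves only single faces, so $D(G)\cdot t\equiv 0\pmod k$ is already the complete condition and nothing further needs checking---but this does not affect correctness.
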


The most elegant application of the Ehrhart theory may be the \emph{Ehrhart-Macdonald reciprocity}, which is given in the following theorem.
\begin{thm}\label{Ehrhart}(Ehrhart-Macdonald reciprocity)
Let $P$ be a rational polytope and $\mathring{P}$ be its interior. Then
$$(-1)^{dimP}\mathbf{Ehr}(\mathring{P};-k)=\mathbf{Ehr}(P;k).$$
\end{thm}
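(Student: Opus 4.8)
The plan is to derive Ehrhart--Macdonald reciprocity from a reciprocity statement for rational cones; this is the standard route and it makes the sign $(-1)^{\dim P}$ transparent. Write $d=\dim P$. \textbf{Step 1 (coning).} Place $P\subseteq\RR^l$ at height $1$ and form $\mathrm{cone}(P)=\{\lambda(p,1):\lambda\ge 0,\ p\in P\}\subseteq\RR^{l+1}$. The lattice points of $\mathrm{cone}(P)$ at height $k$ biject with $(k\cdot P)\cap\mathbb{Z}^l$, and those in the relative interior at height $k\ge 1$ biject with $(k\cdot\mathring P)\cap\mathbb{Z}^l$. Recording height by a variable $t$, the Ehrhart series $\mathrm{Ehr}_P(t)=\sum_{k\ge 0}\mathbf{Ehr}(P;k)t^k$ and $\mathrm{Ehr}_{\mathring P}(t)=\sum_{k\ge 1}\mathbf{Ehr}(\mathring P;k)t^k$ are specializations (all position variables set to $1$) of the cone series $\sigma_K(x)=\sum_{m\in K\cap\mathbb{Z}^{l+1}}x^m$ for $K=\mathrm{cone}(P)$ and for its relative interior.

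\textbf{Step 2 (Stanley reciprocity for cones).} I would first prove, for a simplicial rational cone $K=\sum_{i=1}^n\RR_{\ge 0}w_i$ with $w_i$ integral and linearly independent, that $\sigma_{\mathrm{relint}\,K}(x)=(-1)^n\sigma_K(1/x)$ as rational functions. Indeed, $K$ is tiled by the lattice translates of the half-open parallelepiped $\Pi=\{\sum\lambda_i w_i:0\le\lambda_i<1\}$, so $\sigma_K(x)=\bigl(\sum_{m\in\Pi\cap\mathbb{Z}^{l+1}}x^m\bigr)/\prod_i(1-x^{w_i})$, and likewise for $\mathrm{relint}\,K$ with $\Pi'=\{\sum\lambda_i w_i:0<\lambda_i\le 1\}$. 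The reflection $m\mapsto w_1+\dots+w_n-m$ is a bijection $\Pi\cap\mathbb{Z}^{l+1}\to\Pi'\cap\mathbb{Z}^{l+1}$; it turns the numerator of $\sigma_K$ into that of $\sigma_{\mathrm{relint}\,K}$, while $\prod_i(1-x^{-w_i})=(-1)^n x^{-\sum w_i}\prod_i(1-x^{w_i})$ handles the denominator, giving the claim. One then passes to an arbitrary $n$-dimensional pointed rational cone by triangulating $K$ into simplicial cones on its rays, refining this to a disjoint \emph{half-open} decomposition of $K$ together with a compatible one of $\mathrm{relint}\,K$ (choose a generic functional and make half-open exactly the facets of each piece on which it is negative), and summing the simplicial identities.

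\textbf{Step 3 (descent to $P$).} Applying Step 2 to $K=\mathrm{cone}(P)$, which has dimension $d+1$, and specializing position variables to $1$ as in Step 1 yields $\mathrm{Ehr}_{\mathring P}(t)=(-1)^{d+1}\mathrm{Ehr}_P(1/t)$ as rational functions. By Ehrhart's theorem $\mathbf{Ehr}(P;k)$ agrees for $k\ge 0$ with a quasi-polynomial $q(k)$ (a genuine polynomial when $P$ is a lattice polytope), and for any quasi-polynomial $q$ one has the Popoviciu identity $\sum_{k\ge 1}q(-k)t^k=-\mathrm{Ehr}_q(1/t)$, where $\mathrm{Ehr}_q(t)=\sum_{k\ge 0}q(k)t^k$. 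Combining, $\sum_{k\ge 1}\mathbf{Ehr}(\mathring P;k)t^k=(-1)^{d}\sum_{k\ge 1}q(-k)t^k$, so $\mathbf{Ehr}(\mathring P;k)=(-1)^{d}\mathbf{Ehr}(P;-k)$ for all $k\ge 1$; replacing $k$ by $-k$ gives $(-1)^{\dim P}\mathbf{Ehr}(\mathring P;-k)=\mathbf{Ehr}(P;k)$, as asserted.

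\textbf{Expected main obstacle.} Steps 1 and 3 are bookkeeping, and the combinatorial heart of Step 2 is the one-line reflection bijection of the fundamental parallelepiped. The genuinely delicate point is the half-open refinement of the cone triangulation in Step 2: one must produce a decomposition that is simultaneously a disjoint cover of $K$ and, after passing to relative interiors, a disjoint cover of $\mathrm{relint}\,K$, so that every lattice point on a shared facet is counted exactly once on each side. The remaining care is organizational --- working inside the affine lattice spanned by $P$ when $P$ is not full-dimensional, and observing that no step uses integrality of the vertices of $P$, only rationality, so that $\mathbf{Ehr}(\cdot;k)$ may legitimately be a quasi-polynomial rather than a polynomial.
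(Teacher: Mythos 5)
The paper does not contain a proof of this theorem; it is stated as a classical fact (Ehrhart--Macdonald reciprocity) and used as a black box, so there is no paper argument to compare against. Your proof is a correct sketch of the standard route: coning over $P$, Stanley reciprocity for simplicial rational cones via the reflection $m\mapsto w_1+\dots+w_n-m$ of the fundamental half-open parallelepiped, extension to general pointed rational cones by a half-open triangulation, and descent to $P$ via the Popoviciu--Stanley identity $\sum_{k\ge 1}q(-k)t^k=-\mathrm{Ehr}_q(1/t)$ for rational generating functions of quasi-polynomials. The only loose spot is the one you yourself flagged: ``make half-open exactly the facets of each piece on which [the generic functional] is negative'' is not well-posed as written, since a linear functional does not take a single sign on a facet of a cone. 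The usual fix is to pick a generic vector $w$; for each maximal simplicial cone $K_i=\sum_j\RR_{\ge 0}\,w_j^{(i)}$ in the triangulation, write $w=\sum_j c_j^{(i)} w_j^{(i)}$ and remove exactly the facets of $K_i$ opposite those $w_j^{(i)}$ with $c_j^{(i)}<0$ (equivalently, retain only $K_i\cap(K_i+\epsilon w)$ for small $\epsilon>0$). This produces a disjoint cover of $K$, and swapping which facets are removed produces a compatible disjoint cover of $\mathrm{relint}\,K$, which is precisely what is needed to sum the simplicial reciprocity identities. With that adjustment your argument is complete, and you correctly observe that it uses only rationality (not integrality) of the vertices, so the quasi-polynomial version applies to the rational polytopes appearing in the paper.
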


By the Ehrhart-Macdonald reciprocity theorem and Proposition \ref{local tension Ehrhart}, it is clear that
\begin{equation}\label{tau ehrhart}
(-1)^{dim P}\tau^{loc}(G;-k)=\left|\bigcup_{b\in\mathcal{F}_G} k \cdot P_G(b)\cap \mathbb{Z}^E\right|,
\end{equation}
where $P_G(b)$ is the closure of $\mathring{P}_G(b)$.
Therefore, to prove Theorem \ref{reciprocity for local tension} it is sufficient to construct a bijection between the set in the right-hand side of Equation (\ref{tau ehrhart}) and the set of pairs described in Theorem \ref{reciprocity for local tension}. 

For a graph $G$ with an orientation $\sigma$ on a surface, the kernel of its face matrix \[ker D(G)=\{p\in\mathbb{R}^E: D(G)\cdot p=0\}\] defines a subspace of $\mathbb{R}^E$. Each coordinate hyperplane $H_e=\{p\in\mathbb{R}^E: p(e)=0\}$ intersects with $ker D(G)$ as a hyperplane in $ker D(G)$ except for the case when $e$ is a contractible loop. We abuse our notation $H_e$ to denote the intersection of $H_e$ with $ker D(G)$. We consider a set 
\begin{equation}\label{Equation: H_G}
    \mathcal{H}_G :=ker D(G) \setminus \bigcup_{e\in E} H_e.
\end{equation}
Note that if $G$ has a contractible loop then $\mathcal{H}_G$ is empty, and if $G$ has no contractible loop then $\mathcal{H}_G$ is a \emph{hyperplane arrangement}, i.e. a finite set of hyperplanes containing the origin in $\mathbb{R}^l$ for some natural number $l$. An open region of $\mathcal{H}$ is a connected component of $\mathbb{R}^l\setminus \cup \mathcal{H}$. 
The following lemma plays a key role in our proof of the main theorem.

\begin{lem}\label{boundary acyclic orientation}
Let $G=(V,E,F)$ be a graph on a surface with an orientation $\sigma$. For each open region $R$ in $\mathcal{H}_G$, choose a point $p\in R$. Let  $\sigma(R)$ be an orientation of $G$ obtained from $\sigma$ by reversing the directions of edge $e$'s with $p(e)<0$.
Then the map
\begin{equation*}\label{Equation: the map sigma(R)}
    R \mapsto \sigma(R)
\end{equation*}
is a bijection between open region of $\mathcal{H}_G$ and boundary acyclic orientations of $G$.
\end{lem}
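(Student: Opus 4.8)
The plan is to mimic the classical argument relating regions of the graphic hyperplane arrangement to acyclic orientations, adapted to the subspace $\ker D(G)$ and to the notion of coherently oriented boundaries. First I would check that the map is well defined: if $p, p'$ lie in the same open region $R$ of $\mathcal{H}_G$, then $p(e)$ and $p'(e)$ have the same sign for every $e$ (since no $H_e$ separates $p$ from $p'$), so $\sigma(R)$ does not depend on the chosen point. Next I would observe that, after replacing $\sigma$ by $\sigma(R)$, the point $p$ itself lies in the strictly positive orthant intersected with $\ker D(G)$; so without loss of generality one may argue region by region assuming $p > 0$ coordinatewise, and the conclusion to prove becomes: $\sigma$ is a boundary acyclic orientation of $G$ if and only if there exists $p \in \ker D(G)$ with $p(e) > 0$ for all $e$ (equivalently, the open region of $\mathcal{H}_G$ "containing the all-positive direction" is nonempty), and distinct sign patterns coming from distinct regions give distinct orientations.

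The core is a Farkas/Gordan-type alternative. A vector $p \in \ker D(G)$ is exactly an element of the orthogonal complement of the row space of $D(G)$; the rows of $D(G)$ are the vectors $\overrightarrow{\partial_\sigma f}$, and their integer span contains precisely the vectors $\overrightarrow{\partial_\sigma F'}$ for $F' \subseteq F$ (as noted in the body, the signed sum of face boundaries over $F'$ is the signed boundary $\overrightarrow{\partial_\sigma F'}$, because interior edges cancel). By Gordan's theorem, exactly one of the following holds: (a) there is $p \in \ker D(G)$ with $p > 0$; or (b) there is a vector $y$ in the row space of $D(G)$, i.e. some $\overrightarrow{\partial_\sigma F'}$ after clearing denominators, that is nonnegative and nonzero — that is, a coherently oriented boundary with all nonzero entries $+1$. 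Reversing the roles (replacing $F'$ by its complement flips all signs) shows (b) is equivalent to the existence of a coherently oriented boundary in the sense of the definition. Hence $\mathcal{H}_G$ has a region meeting the positive orthant iff $\sigma$ is boundary acyclic; running this over all $2^{|E|}$ sign patterns obtained by reorienting $\sigma$ gives that the regions of $\mathcal{H}_G$ are in bijection with the boundary acyclic reorientations of $\sigma$, which is exactly $BAO(G)$. I would handle the contractible-loop case separately: such a loop $e$ forces $H_e = \ker D(G)$, so $\mathcal{H}_G = \emptyset$; correspondingly $e$ lies in the boundary $\partial f$ of the unique face it is incident to with $\overrightarrow{\partial_\sigma f}$ supported only on $\{e\}$, giving a coherently oriented boundary, so $BAO(G) = \emptyset$ too, and the statement holds vacuously.

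To see the map is injective, note that $\sigma(R)$ recovers the sign vector $\mathrm{sgn}(p) \in \{\pm 1\}^E$ (which edges were reversed), and two points in different regions have different sign vectors since the $H_e$ are exactly the walls; to see it is surjective onto $BAO(G)$, given a boundary acyclic orientation $\mathfrak{o}$, write $\mathfrak{o}$ as a reorientation of $\sigma$ along a set $S \subseteq E$, apply the alternative above to the reoriented graph to produce $p \in \ker D(G)$ positive in the reoriented convention, hence a point of $\ker D(G)$ with $\mathrm{sgn}(p)$ encoding $S$, lying in $\mathcal{H}_G$, whose region maps to $\mathfrak{o}$. The main obstacle I anticipate is the precise bookkeeping in the Gordan alternative: one must be careful that the integer row span of $D(G)$ — rather than merely the real row span — is what captures all boundaries $\overrightarrow{\partial_\sigma F'}$, and that a nonnegative nonzero rational combination of rows can be scaled to an honest boundary with entries in $\{0, \pm 1\}$ that is coherently oriented; Remark~\ref{Remark: boundary is a 'boundary'} and the cancellation of interior edges are exactly what make this go through, and I would lean on the argument in the proof of Lemma~\ref{lem tco characterization}(1) (reachability / cut construction) transported through duality if a more hands-on proof is preferred over the linear-programming alternative.
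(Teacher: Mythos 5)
Your route is genuinely different from the paper's. The paper does \emph{not} use a linear-programming alternative: for the forward direction it shows directly that $\sum_{e\in\partial F'}\overrightarrow{\partial_\sigma F'}(e)\,p(e)=\sum_{f\in F'}\sum_{e\in\partial f}\overrightarrow{\partial_\sigma f}(e)\,p(e)=0$ for $p\in\ker D(G)$, so the nonzero entries of $\overrightarrow{\partial_{\sigma(R)}F'}$ cannot all share a sign; and for surjectivity it invokes Theorem~\ref{duality of local AO and TCO} to characterize a boundary acyclic $\mathfrak{o}$ as an orientation in which every edge lies on a coherently oriented cocycle, and then constructs an explicit point $p_{\mathfrak{o}}=\sum_C\overrightarrow{C}$ (summing the $\{0,\pm1\}$-vectors of all coherently oriented cocycles $C$) lying in $\ker D(G)$ with the desired sign pattern. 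That is the Greene--Zaslavsky style proof. Your Gordan-alternative argument is a reasonable substitute and would make the lemma self-contained, but the two proofs are not the same.

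There is a gap in your conversion step, and it is not merely ``bookkeeping.'' Gordan's alternative (b) produces a vector $D(G)^{\!\top}y\ge 0$, $\ne 0$, for some \emph{real} $y\in\mathbb{R}^F$; this is a point of the real row span, and it cannot in general be ``scaled'' to a $\{0,\pm1\}$-valued vector, nor need it be an integer multiple of any $\overrightarrow{\partial_\sigma F'}$. What actually rescues the argument is the structure of $D(G)$: it is the (signed) incidence matrix of the dual graph $G^*$, so every nonzero column has exactly one $+1$ and one $-1$. Given $y$ with $D(G)^{\!\top}y\ge0$, pick a threshold $t$ strictly between two consecutive values of $y$ that straddle an edge where $D(G)^{\!\top}y$ is strictly positive, and set $F'_t=\{f: y(f)>t\}$; the column structure forces $\overrightarrow{\partial_\sigma F'_t}\ge 0$ and nonzero, giving an honest coherently oriented boundary. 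This level-set argument (or an appeal to total unimodularity of the incidence matrix) is what your ``clearing denominators / scaling'' is silently standing in for, and it is essential. Likewise, your phrase ``their integer span contains precisely the vectors $\overrightarrow{\partial_\sigma F'}$'' is overstated — the integer span is a group and contains much more; you only need the $\{0,1\}$-combinations. The surjectivity bookkeeping (reorienting along $S$, applying the alternative to the reoriented matrix, and reading off the sign pattern) and the contractible-loop and injectivity observations are fine.
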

\begin{proof}
Since moving $p$ continuously in the region $R$ does not change the sign of $p(e)$, the map $R\mapsto \sigma(R)$ is well-defined.
Let $R$ be an open region of $\mathcal{H}_G$ and $p$ be a point in $R$, and $F'$ be a subset of $F$.
Since $p\in\text{ker}D(G)$, 
\begin{equation}\label{eq: boundary sum}
    \sum_{e \in \partial F'} \overrightarrow{\partial_{\sigma} F'}(e) p(e)
    =\sum_{f \in F'} \sum_{e \in \partial f} \overrightarrow{\partial_{\sigma} f}(e) p(e) = 0
\end{equation}
By definition of $\sigma(R)$, the signs of $\overrightarrow{\partial_{\sigma(R)} F'}(e)$ and $\overrightarrow{\partial_{\sigma} F'}(e) p(e)$ are the same for all $e \in E$.
By \eqref{eq: boundary sum}, the nonzero entries of $\overrightarrow{\partial_{\sigma(R)} F'}$ are not all $1$ or $-1$. Since $F' \subseteq F$ is arbitrary, $\sigma(R)$ is a boundary acyclic orientation.

% Let $R$ be a open region of $\mathcal{H}_G$ and $p$ be a point in $R$. We associate an orientation to the component by $\sigma(R)$ by reversing the orientation of edge $e$'s for $p(e)<0$. This is well defined because moving $p$ continuously in the region does not change the sign of $p(e)$. We claim that $\sigma(R)$ is a boundary acyclic orientation. Let $C$ be a boundary and $\Sigma'$ be the subset of faces which bounds $C$ (See Remark~\ref{Remark: boundary is a 'boundary'}). Since $p\in\text{ker}D(G)$, $$\sum_{e\in C}p(e)=\sum_{F\subseteq \Sigma'}\sum_{e\in \partial F}p(e)=0.$$ Therefore, $C$ is not coherently oriented in $\sigma(R)$.

Recall that the boundary acyclic orientations are dual of the totally cyclic orientations (Theorem~\ref{duality of local AO and TCO}), 
% , taking dual of the definition of the totally cyclic orientation gives a characterization of the boundary acyclic orientation.
and the 
% (directed) boundaries are the dual of (coherently oriented) cutsets and 
(coherently oriented) cocycles are the dual of (directed) cycles. Therefore, an orientation $\mathfrak{o}$ is a boundary acyclic orientation of $G$ if and only if there exists a coherently oriented cocycle containing $e$ for each edge $e$ in $G$.

Let $\mathfrak{o}$ be a boundary acyclic orientation of $G$. For each coherently oriented cocycle $C$ in $\mathfrak{o}$, assign a vector $\overrightarrow{C}\in \mathbb{Z}^E$ by
\[
\overrightarrow{C}(e):=
\begin{cases}
0, & \text{ if } e\notin C \\
1, & \text{ if the orientation of } e \text{ in } \mathfrak{o} \text{ and } \sigma \text{ agree,}\\ 
-1, & \text{ if the orientation of } e \text{ in } \mathfrak{o} \text{ and } \sigma \text{ do not agree.}
\end{cases}
\]
Note that $D(G)\cdot \overrightarrow{C}=0$, by definition of the face matrix and cocycle.
Let us define a vector $t\in \mathbb{R}^E$ by
\[
p_\mathfrak{o}=\sum_{C} \overrightarrow{C},
\]
where the summation is over coherently oriented cocycle $C$ in $\mathfrak{o}$. Since each vector $\overrightarrow{C} \in ker D(G)$, it is clear that $p_\mathfrak{o}\in ker D(G)$. On the other hand, by the characterization of boundary acyclic orientation, one concludes that $p_\mathfrak{o}(e)\neq0$ for each $e\in E$. Moreover, $p_\mathfrak{o}(e)>0$ if the orientation of $e$ in $\mathfrak{o}$ and $\sigma$ agrees, and $p_\mathfrak{o}(e)<0$ otherwise. It is easy to show that the region $R$ containing $p_\mathfrak{o}$ satisfies $\sigma(R)=\mathfrak{o}$.
\end{proof}
\begin{rem}
In \cite{GZ83}, two lemmas are similar to Lemma~\ref{boundary acyclic orientation}. One is \cite[Lemma 7.1]{GZ83} which gives a bijection between open regions of a hyperplane arrangement associated with a graph $G$ and acyclic orientations of $G$. The other one is \cite[Lemma 8.1]{GZ83} which gives a bijection between open regions of a hyperplane arrangement associated to a graph $G$ and totally cyclic orientations of $G$.
The statement of Lemma~\ref{boundary acyclic orientation} is parallel with \cite[Lemma 7.1]{GZ83}. However, the proof is quite similar to the argument of \cite[Lemma 8.1]{GZ83}.
% The statement of the above lemma is parallel with \cite[Lemma 7.1]{GZ83}, which states that there is a bijection between connected components of the complement of the hyperplane arrangement associated with a graph $G$ and acyclic orientations of $G$. However, the proof follows the argument of \cite[Lemma 8.1]{GZ83}, which states that there is a bijection between connected components of the complement of the hyperplane arrangement associated to a graph $G$ and totally cyclic orientations of $G$.
\end{rem}

\begin{ex}
In our running example (Example~\ref{Example: face matrix}), \[\mathcal{H}_G = \mathbb{R}_{e_2}\oplus \mathbb{R}_{e_4}\oplus \mathbb{R}_{e_1 - e_3} \setminus \bigcup_{e \in E} H_e\] has $8$ connected components. For example, $R=\{(a,b,a,-c): a,b,c>0\}$ is an open region of $ker D(G)\setminus \bigcup_{e\in E}H_e$. The orientation $\sigma(R)$ obtained from given orientation $\sigma$ by reversing orientation of $e_3$ and $e_4$ is a boundary acyclic orientation. 
\end{ex}
We now give a proof of Theorem \ref{reciprocity for local tension}.

\begin{proof}(proof of Theorem \ref{reciprocity for local tension})
Let $k$ be a positive integer and 
$$
P_G(b):=\{p\in \mathbb{R}^E \mid D(G)\cdot p =b, 0\le p(e) \le 1, \text{ for all } e\in E\}
$$
for a feasible $b$, which is the closure of $\mathring{P}_G(b)$.
By Theorem~\ref{Ehrhart}, it suffices to show that there is a bijection   
$$
\phi : \bigcup_{b\in\mathcal{F}_G} k \cdot P_G(b)\cap \mathbb{Z}^E
\rightarrow
\{(f,\mathfrak{o}) \mid f \text{ is a local } \mathbb{Z}_k\text{-tension on } G, \mathfrak{o} \in BAO(G\ssetminus supp(f)) \}.
$$

Fix $t \in \bigcup_{b\in\mathcal{F}_G} k \cdot P_G(b)\cap \mathbb{Z}^E$. Let $\bar{t}$ be the local $\mathbb{Z}_k$-tension corresponding to $t$, which is given by $t$ modulo $k$.
Note that $t$ induces a set partition $E=E_0(t) \cup  supp(\bar{t}) \cup E_{1}(t)$, where 
\[
E_0(t)=\{e \in E \mid t(e)=0 \}, E_1(t)=\{e \in E \mid t(e)=k \}, \text{ and } supp(\bar{t})=\{e\in E \mid \bar{t}(e)\neq 0\}.
\]
Let us fix a point $z_b$ in $\mathring{P}_G(b)$ and set $z_t:=z_b-t$. And, let  $z_t|_{E_0(t)\cup E_1(t)}$ denotes the restriction of $z_t$ by only considering coordinates corresponding to $E_0(t) \cup E_1(t)$. We claim that 
$$
z_t|_{E_0(t)\cup E_1(t)} \in ker D(G\ssetminus supp(\bar{t})) \setminus \bigcup_{e\in E_0(t)\cup E_1(t)}H_e.
$$
To prove the claim, let us describe $D(G\ssetminus e)$ for $e \in E$. For the first case, suppose $e$ is not a coloop and there are faces $f_1$ and $f_2$ that are incident to $e$. Then $F(G\setminus e)$ consists of $F(G)\setminus\{f_1, f_2\}$ and a new face $f$ that is topologically union of $f_1$, $f_2$ and $e$ in $G$. In this case, $D(G\setminus e)$ is given by replacing two rows indexed by $f_1$ and $f_2$ with a new row indexed by $f$ which is the sum of rows indexed by $f_1$ and $f_2$ and then deleting the column indexed by $e$ (zero column). For the second case, suppose $e$ is a coloop and $e$ is incident to a face $f$ whose boundary contains $e$ twice. Then $F(G/e)$ consists of $F(G)\setminus\{f\}$ and a new face $f'$ whose boundary coincide with that of $f$. In this case $D(G/e)$ is given by deleting a column indexed by $e$ (zero column) and replacing a row index $f$ by $f'$. By the descriptions above and the fact that $z_t=z_b - t \in ker D(G)$, $z_t|_{E_0(t)\cup E_1(t)}$ is in $ker D(G \ssetminus supp(\bar{t}))$. Since the $e$-coordinates of $z_b$ and $t$ are distinct for each $e \in E_0(t) \cup E_1(t)$, we conclude that the claim holds.

% Let $\sigma(t)$ be the orientation
% of $G\ssetminus supp(\bar{t})$ obtained by reversing the directions of $E_1(t)$ from $\sigma|_{E_0(t)\cup E_1(t)$, 
% \ber
% where $\sigma|_{E_0(t)\cup E_1(t)}$ is the orientation induced by restricting $\sigma$ on $E_0(t)\cup E_1(t)$.
% \er 
% Then $\sigma(t) \in BAO(G\sslash supp(\bar{t}))$ by Lemma~\ref{boundary acyclic orientation}. Let $\phi$ be the map defined by
% $$
% \phi(t):=(\bar{t}, \sigma(t)).
% $$
% \ws{위 문단을 초록색 문단으로 대체하면 어떤가요? region에서 orientation을 고르는 것이 잘 드러나는게 좋을 것 같아요.}

Now we are ready to define a map $\phi$.
Let $R$ be the region of 
$$
ker D(G\ssetminus supp(\bar{t}))\setminus \bigcup_{e\in E_0(t)\cup E_1(t)}H_e
$$
containing $z_t|_{E_0(t)\cup E_1(t)}$. Let us set $\sigma(t) := \sigma(R)$, where $\sigma(R)$ is a boundary acyclic orientation of $G\ssetminus supp(\bar{t})$ as given in Lemma~\ref{boundary acyclic orientation}. Let $\phi$ be the map defined by

$$
\phi(t)=(\bar{t}, \sigma(t)).
$$

Let us construct the inverse of $\phi$.
For a local $\mathbb{Z}_k$-tension $t'$ and a boundary acyclic orientation $\mathfrak{o}$ of $G\ssetminus supp({t'})$, define $\psi(t',\mathfrak{o}) \in \mathbb{Z}^E$ by
$$
\psi(t',\mathfrak{o})(e):=
\begin{cases}
t'(e), & \text{   if } t'(e) \neq 0 \\
0, & \text{      if } t'(e)=0 \text{ and $\sigma$ agrees with }\mathfrak{o}\\
k, & \text{      if } t'(e)=0 \text{ and $\sigma$ disagrees with }\mathfrak{o}.
\end{cases}
$$
Note that $\phi\circ\psi(t',\mathfrak{o})=(t',\mathfrak{o})$, and the point $\psi(t',\mathfrak{o})$ is in the boundary of $k \cdot P_{G}(b')$ where $b'=D(G)\cdot \psi(t',\mathfrak{o})$. Therefore, it remains to show that $b'$ is feasible. By Lemma \ref{boundary acyclic orientation} again, there is a vector $x\in \mathbb{R}^E$ with $D(G)\cdot x=0$ and $x(e)<0$ for $e$ whose given orientation agrees with $\mathfrak{o}$ and $x(e)>0$ for $e$ whose given orientation disagrees with $\mathfrak{o}$. Then $(\psi(t',\mathfrak{o})+\epsilon x)$ is in $k \cdot \mathring{P}_G(b')$ for sufficiently small $\epsilon>0$ which means that $\mathring{P}_G(b')$ is non-empty, thus $b'\in \mathcal{F}_{G}$.

\end{proof}
%%%%%%%%%%%%%%%%%%%%%%%%%%%%%%%%%%%%%%%%%%%%%%%%%%%%%%%%%%%%%%%%%%%%%%%%%%%%%%%%%%%%%%%%%%%%%%%%%%%%%%%%%%%%%%%%%%%%%%%%%%%%%%%%%%%%%%%%%%%%%%%%%%%%%%%%%%%%%%

To attain our main goal, which is explaining dualities and reciprocities depicted in Figure~\ref{figure:cube}, we define one more notion.
Recall that a $\mathbb{Z}_k$-flow of an oriented graph $G$ is an assignment of elements in $\mathbb{Z}_k$ to each edge such that the sum of values assigned to the edges adjacent to any vertex with respect to a given orientation is zero. It is equivalent to check the condition for each cutset not only the edges adjacent to a vertex (cut set associated with a vertex). The \emph{balanced} $\mathbb{Z}_k$-flow is a $\mathbb{Z}_k$-flow such that for each cocycle $C$, the values assigned to the edges of $C$ sum up to zero. For a graph $G$ on a surface, let $\phi^{bal}(G;k)$ be the number of nowhere-zero balanced $\mathbb{Z}_k$-flow. Now we state all the dualities and reciprocities described in Figure \ref{figure:cube}.

\begin{thm}\label{thm:main}
Let $G$ be an oriented graph on a surface and $k$ be a positive integer. There are dualities as follows.
\begin{align*}
    \tau(G;k)&=\phi^{bal}(G^*;k),&
    \tau^{loc}(G;k)&=\phi(G^*;k),\\
    |AO(G)|&=|TBO(G^*)|,&
    |BAO(G)|&=|TCO(G^*)|.
\end{align*}
And they are related with the following identities.
\begin{align*}
    |\tau(G;-1)|&=|AO(G)|,&
    |\tau^{loc}(G;-1)|&=|BAO(G)|,\\
    |\phi(G^*;-1)|&=|TCO(G^*)|,&
    |\phi^{bal}(G^*;-1)|&=|TBO(G^*)|.
\end{align*}
Moreover, they are related via reciprocity as follows.
\begin{align*}
    |\tau(G;-k)|&=|\{(t,\mathfrak{o}): t\in T(G;k) \text{ and }\mathfrak{o}\in AO(G\setminus supp(t))\}|,\\
    |\tau^{loc}(G;-k)|&=|\{(t,\mathfrak{o}): t\in T^{loc}(G;k) \text{ and }\mathfrak{o}\in BAO(G\ssetminus supp(t))\}|,\\
    |\phi(G;-k)|&=|\{(f,\mathfrak{o}): f\in \Phi(G;k) \text{ and }\mathfrak{o}\in TCO(G\sslash supp(t))\}|,\\
    |\phi^{bal}(G;-k)|&=|\{(f,\mathfrak{o}): f\in \Phi^{bal}(G;k) \text{ and }\mathfrak{o}\in TBO(G\slash supp(t))\}|,
    \end{align*}
where $T(G;k)$ ($T^{loc}(G;k)$, $\Phi(G;k)$ and $\Phi^{bal}(G;k)$, respectively) denotes the set of $\mathbb{Z}_k$-tensions (local tensions, flows and balanced flows, respectively).
\end{thm}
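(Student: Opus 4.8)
The plan is to assemble Theorem~\ref{thm:main} from the duality theorems of Section~\ref{section3}, the tension and flow reciprocities of \cite{BS12}, and Theorem~\ref{reciprocity for local tension}; the only genuinely new input is the duality $\tau(G;k)=\phi^{bal}(G^*;k)$ and its reciprocity counterpart. I would begin by recording two elementary facts. First, the face matrix obeys $D(G)=\pm M(G^*)$, where $M(G^*)$ is the vertex--edge incidence matrix of $G^*$ with the dual orientation $\sigma^*$; this is immediate from the definitions of $D(G)$ and $\sigma^*$. Second, every row $\overrightarrow{\partial_\sigma f}$ of $D(G)$ is the signed indicator vector of a closed walk of $G$, hence lies in the cycle space $Z(G)\subseteq\mathbb{R}^E$ (the span of the signed indicators of cycles of $G$); consequently every boundary $\overrightarrow{\partial_\sigma F'}=\sum_{f\in F'}\overrightarrow{\partial_\sigma f}$ lies in $Z(G)$, i.e., every signed cutset of $G^*$ lies in $Z(G)$, and dually every cutset of $G$ lies in $Z(G^*)$.

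From these I read off the four dualities. The identity $\tau^{loc}(G;k)=\phi(G^*;k)$ is immediate from $D(G)=\pm M(G^*)$: a nowhere-zero $x\in\mathbb{Z}_k^E$ with $D(G)x=0$ is exactly a nowhere-zero $\mathbb{Z}_k$-flow of $G^*$, and the ambient sign pattern is harmless on nowhere-zero functions. For $\tau(G;k)=\phi^{bal}(G^*;k)$, observe that a $\mathbb{Z}_k$-tension $x$ of $G$ is orthogonal to every cycle of $G$, hence to every face boundary of $G$, hence to every cutset of $G^*$, so $x$ is a $\mathbb{Z}_k$-flow of $G^*$; since moreover the cocycles of $G^*$ are precisely the cycles of $G$, this flow is \emph{balanced}. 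Conversely a balanced $\mathbb{Z}_k$-flow of $G^*$, being orthogonal to all cocycles of $G^*$, is a $\mathbb{Z}_k$-tension of $G$; the dual-orientation correspondence is the identity on $\mathbb{Z}_k^E$ up to a fixed sign pattern, hence restricts to a support-preserving bijection between the two sets, and on nowhere-zero elements yields $\tau(G;k)=\phi^{bal}(G^*;k)$. The remaining dualities $|AO(G)|=|TBO(G^*)|$ and $|BAO(G)|=|TCO(G^*)|$ are Theorem~\ref{duality of AO and TBO} and Theorem~\ref{duality of local AO and TCO}.

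Next I would prove the balanced-flow reciprocity, which I expect to be the main obstacle. Using $\phi^{bal}(G;k)=\tau(G^*;k)$, I apply the tension reciprocity of \cite{BS12} to $G^*$; up to a sign $(-1)^{|F|-c(G)}$, which is irrelevant since the assertion is phrased with absolute values, this identifies $|\phi^{bal}(G;-k)|=|\tau(G^*;-k)|$ with the number of pairs $(t,\mathfrak{o})$ with $t\in T(G^*;k)$ and $\mathfrak{o}\in AO(G^*\setminus supp(t))$. I then transport each piece across the dual: (i) the correspondence of the previous paragraph extends to a support-preserving identification of $T(G^*;k)$ with $\Phi^{bal}(G;k)$ (for \emph{all} balanced flows, not only the nowhere-zero ones, which needs that ``orthogonal to every cycle of $G$'' and ``orthogonal to every cocycle of $G^*$'' cut out the same submodule of $\mathbb{Z}_k^E$); (ii) by Proposition~\ref{del-con dual} and the fact that deletion agrees for abstract and ribbon graphs, $G^*\setminus supp(t)=(G/ supp(t))^{*}$, again a graph on a surface; (iii) $|AO((G/ supp(t))^{*})|=|TBO(G/ supp(t))|$ by Theorem~\ref{duality of AO and TBO}. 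Combining (i)--(iii) yields $|\phi^{bal}(G;-k)|=|\{(f,\mathfrak{o}):f\in\Phi^{bal}(G;k),\ \mathfrak{o}\in TBO(G/ supp(f))\}|$. The delicate points are precisely (i), and the verification---carried out exactly as in the proof of Theorem~\ref{reciprocity for local tension}---that restricting $t$ to its zero set is compatible with the contraction appearing in (ii) and (iii).

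Finally the remaining items follow by citation and by specialization. The reciprocity $|\tau(G;-k)|=|\{(t,\mathfrak{o})\}|$ is the tension reciprocity of \cite{BS12}, $|\phi(G;-k)|=|\{(f,\mathfrak{o})\}|$ is Theorem~\ref{BS12}, and $|\tau^{loc}(G;-k)|=|\{(t,\mathfrak{o})\}|$ is Theorem~\ref{reciprocity for local tension}. For the four evaluations at $-1$ I set $k=1$ in the corresponding reciprocity: the unique $\mathbb{Z}_1$-tension or $\mathbb{Z}_1$-flow is $0$, its support is empty, and $G\setminus\emptyset=G\ssetminus\emptyset=G/\emptyset=G$, so one obtains $|\tau(G;-1)|=|AO(G)|$, $|\tau^{loc}(G;-1)|=|BAO(G)|$, $|\phi(G^*;-1)|=|TCO(G^*)|$, and $|\phi^{bal}(G^*;-1)|=|TBO(G^*)|$---which is also consistent with, and directly implied by, the four dualities established above.
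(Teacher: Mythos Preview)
Your approach is correct and follows essentially the same strategy as the paper's proof, just with considerably more detail filled in. The paper's proof is a three-sentence argument: it notes that all but the first and last equalities are already established, asserts that $\tau(G;k)=\phi^{bal}(G^*;k)$ follows directly from the definitions, and then obtains the balanced-flow reciprocity by combining this duality with the tension reciprocity of \cite{BS12}---exactly the route you take via (i)--(iii). Your explication of why $T(G^*;k)=\Phi^{bal}(G;k)$ (cocycles of $G^*$ are cycles of $G$, and cutsets of $G$ are boundaries in $G^*$ hence sums of cocycles of $G$) and your use of Proposition~\ref{del-con dual} to identify $G^*\setminus supp(t)$ with $(G/supp(t))^*$ are the details the paper leaves implicit. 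One minor remark: the ``delicate verification'' you flag at the end of your third paragraph is not actually needed here, since you are invoking the tension reciprocity of \cite{BS12} as a black box rather than re-running the Ehrhart argument; the compatibility of restriction with contraction is already absorbed into that citation.
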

\begin{proof}

All that remains to show is the first and the last equality.
One can directly obtain the duality between balanced flows and the tensions on graphs on surfaces from their definitions. This proves the first equality.
Combining this duality and the reciprocity \cite[Theorem 4.2]{BS12} yields the last equality.

% By the construction, one can directly obtain the duality between balanced flows and the tensions on graphs on surfaces (The first equation in Theorem~\ref{thm:main}).
% Combining this duality and the reciprocity \cite[Theorem 4.2]{BS12} yields a reciprocity theorem for balanced flows (The last equation in Theorem~\ref{thm:main}).
% Or alternatively, this fact also can be obtained by the reciprocity theorem for tensions (Theorem \ref{reciprocity for local tension}) and the duality between local tension and the flow polynomials(\cite[p.5]{DGMVZ05}).
\end{proof}
\begin{rem}
In~\cite{GKRV18}, Goodall et al. defined a Tutte polynomial for graphs on orientable surfaces and showed that the local tension polynomial and the flow polynomial can be obtained as specializations. Moreover, it is not hard to get the tension polynomial and the balanced flow polynomial as specializations of their Tutte polynomial. Therefore, the combinatorial objects in Theorem~\ref{thm:main} can be counted as specializations of the Tutte polynomial of \cite{GKRV18}.

Throughout this paper, graphs on surfaces are assumed to be orientable. In \cite{GLRV20}, a Tutte polynomial for graphs on non-orientable surfaces is defined. They showed that their Tutte polynomial specializes to the local tension and the `local flow' (See~\cite{GLRV20} for the definition) polynomial for graphs on (general) surfaces. It would be interesting to investigate combinatorial objects that can be counted as specializations of the Tutte polynomial of \cite{GLRV20} generalizing our work in this paper.

\end{rem}
\subsection{Inside-out polytope and reciprocity for integral local tensions}\label{section4.2}
For a graph $G$, an \emph{integral $k$-flow} of $G$ is a $\mathbb{Z}$-flow of $G$ with its absolute values are less than $k$. Tutte \cite{Tut50, Tut54} proved that nowhere-zero $\mathbb{Z}_k$-flow exists if and only if nowhere-zero integral $k$-flow exists. The number of nowhere-zero $\mathbb{Z}_k$-flows however, is not equal to the number of nowhere-zero integral $k$-flows. More recently, Kochol \cite[Theorem 1]{Koc02} showed that the number of nowhere-zero integral $k$-flows is a polynomial in $k$. We call it the \emph{integral $k$-flow polynomial} and denote it by $\phi_{\mathbb{Z}}(G;k)$. Inspired by Kochol's work, Beck and Zaslavsky developed a theory of inside-out polytope \cite{BZ06} and provided a reciprocity theorem for integral $k$-flows \cite[Theorem 3.1]{BZ06JCT}.

For a graph $G$ on a surface with an orientation $\sigma$, we define an \emph{integral k-local tension} to be a mapping $t:E\overrightarrow{} \{-(k-1),\dots,-1,0,1,\dots,k-1\}$ such that for each face $f$, $$\sum_{e\in \partial f} \overrightarrow{\partial_\sigma f}(e)t(e)=0.$$ 
Let $\tau^{loc}_\mathbb{Z}(G;k)$ be the number of nowhere-zero integral $k$-local tensions of $G$. An integral $k$-local tension $t$ and a  boundary acyclic orientation $\mathfrak{o}$ is called \emph{compatible} if $\mathfrak{o}$ agrees with $\sigma$ for edges $e$ with $t(e)>0$, disagrees for edges $e$ with $t(e)<0$.
In this subsection, we provide a reciprocity theorem for integral local tension as follows.

\begin{thm}\label{thm: reciprocity for integral local tension}
Let $G$ be an oriented graph on a surface. Then for $k\ge 0$,
$|\tau^{loc}_{\mathbb{Z}}(G;-k)|$ equals to the number of pairs $(t,\mathfrak{o})$, where $t$ is an integral $(k+1)$-local tensions and $\mathfrak{o}$ is a  boundary acyclic orientation compatible with $t$.

In particular, the absolute value of the constant term $\tau^{loc}_\mathbb{Z}(0)$ equals to the number of boundary acyclic orientations.
\end{thm}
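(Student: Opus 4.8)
The plan is to mimic the Ehrhart-theoretic argument used for Theorem~\ref{reciprocity for local tension}, but now working with the \emph{inside-out polytope} associated to the face matrix $D(G)$, in the spirit of Beck--Zaslavsky~\cite{BZ06,BZ06JCT}. First I would set up the polytope: let $\mathcal{C}_G = \{p\in\mathbb{R}^E : D(G)\cdot p = 0,\ -1\le p(e)\le 1 \text{ for all } e\in E\}$, which is a rational (indeed lattice) polytope sitting inside the subspace $\ker D(G)$, and let $\mathcal{H}_G$ be the hyperplane arrangement from Equation~\eqref{Equation: H_G}. Then nowhere-zero integral $k$-local tensions correspond exactly to lattice points in the open dilate $(k\cdot \mathring{\mathcal{C}}_G)\cap\mathbb{Z}^E$ that avoid all the coordinate hyperplanes $H_e$; i.e. $\tau^{loc}_\mathbb{Z}(G;k)$ is the \emph{open Ehrhart quasi-polynomial} of the inside-out polytope $(\mathcal{C}_G,\mathcal{H}_G)$. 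Since $\mathcal{C}_G$ and all hyperplanes are lattice, this quasi-polynomial is in fact a polynomial in $k$ (recovering Kochol-type polynomiality), so evaluation at $-k$ makes sense.

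Next I would invoke Ehrhart--Macdonald reciprocity for inside-out polytopes (\cite[Theorem 4.1]{BZ06} or the version in~\cite{BZ06JCT}): up to the sign $(-1)^{\dim\mathcal{C}_G}$, the value $\tau^{loc}_\mathbb{Z}(G;-k)$ counts pairs $(p,R)$ where $R$ is an open region of $\mathcal{H}_G$ and $p$ is a lattice point in $k\cdot(\overline{\mathcal{C}_G\cap R})$, i.e. in the closed polytope $k\cdot\mathcal{C}_G$ and in the closure of the region $R$. The sign $(-1)^{\dim\mathcal{C}_G}$ should match the claimed $|\cdot|$ on the left side, exactly as in the proof of Theorem~\ref{reciprocity for local tension} (and one checks $\dim\mathcal{C}_G = \dim\ker D(G)$ when $G$ has no contractible loop; the contractible-loop case forces both sides to vanish). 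Now apply Lemma~\ref{boundary acyclic orientation}: open regions $R$ of $\mathcal{H}_G$ biject with boundary acyclic orientations $\mathfrak{o}=\sigma(R)$. A lattice point $p$ in $k\cdot\mathcal{C}_G$ is a $\mathbb{Z}$-valued local tension with $|p(e)|\le k$, i.e. an integral $(k+1)$-local tension; and $p$ lying in the closure $\overline{R}$ of the region $R$ translates precisely to: $p(e)\ge 0$ whenever $\sigma(R)$ agrees with $\sigma$ on $e$, and $p(e)\le 0$ whenever it disagrees — which is exactly the compatibility condition in the statement. Hence pairs $(p,R)$ biject with compatible pairs $(t,\mathfrak{o})$, giving the count.

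The main obstacle I anticipate is the careful bookkeeping at the closure, namely verifying that ``$p$ in the closure of region $R$'' is equivalent to ``$\mathfrak{o}=\sigma(R)$ is compatible with the integral local tension $t=p$'' \emph{including} the edges where $p(e)=0$. For such an edge $e$, $p$ lies on the hyperplane $H_e$, so it is on the boundary of $R$ for \emph{every} region adjacent to $H_e$ on the correct side; compatibility imposes no constraint on the sign of $\mathfrak{o}$ at such $e$ (since $t(e)=0$), and one must confirm this matches the set of regions whose closure contains $p$. This is the step where the argument of Lemma~\ref{boundary acyclic orientation} — choosing a perturbation vector $x\in\ker D(G)$ with prescribed signs — gets reused to show every such region is realized. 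The final ``in particular'' statement is then immediate by setting $k=0$: an integral $1$-local tension is identically zero, so compatibility is vacuous, every $\mathfrak{o}\in BAO(G)$ pairs with the zero tension, and $|\tau^{loc}_\mathbb{Z}(G;0)|=|BAO(G)|$.
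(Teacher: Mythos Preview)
Your proposal is correct and follows essentially the same approach as the paper: set up the inside-out polytope $(P,\mathcal{H}_G)$ with $P=[-1,1]^E\cap\ker D(G)$, identify $\tau^{loc}_{\mathbb{Z}}(G;k)$ as its open Ehrhart quasipolynomial, apply Beck--Zaslavsky reciprocity (Theorem~\ref{Ehrhart for Inside-out}), and then use Lemma~\ref{boundary acyclic orientation} to translate ``$x\in\bar R$'' into the compatibility condition. The closure bookkeeping you flag as a potential obstacle is precisely the observation the paper states in one line (``$x$ and $\sigma(R)$ are compatible if and only if $x\in\bar R$''), and your $k=0$ specialization for the second assertion is also the paper's argument.
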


% \begin{thm}
% Let $G=(V,E,F)$ be an oriented graph on a surface and $k$ be a positive integer.
% \begin{enumerate}
%     \item The number of nowhere-zero integral $k$-local tensions, $\tau^{loc}_\mathbb{Z}(G;k)$ is a polynomial in $k$.
%     \item $\tau^{loc}_{\mathbb{Z}}(G;-k)=|\{(t,\sigma): t\in T^{loc}_{\mathbb{Z}}(G;k) \text{ and }\sigma\in BAO(G\setminus supp(t))\}|$.
%     \item In particular, $\tau^{loc}_{\mathbb{Z}}(G;0)$ equals to the number of boundary acyclic orientations of $G$.
% \end{enumerate}
% \end{thm}

% \ws{정의가 필요한지 체크하기.: closed convex polytope 문장은 없어도 될듯?}
To prove the above theorem, we recall the theory of inside-out polytopes studied in \cite{BZ06}. 
A hyperplane arrangement $\mathcal{H}$ is called \emph{rational} if each hyperplane in $\mathcal{H}$ has a rational normal vector. 
% An open region of $\mathcal{H}$ is a connected component of $\mathbb{R}^l\setminus \cup \mathcal{H}$. Its closure is called a closed region. 

A pair $(P,\mathcal{H})$ is called a \emph{rational inside-out polytope} if $P$ is a rational convex polytope and $\mathcal{H}$ is a rational hyperplane arrangement. An open region of $(P,\mathcal{H})$ is a nonempty intersection of an open region of $\mathcal{H}$, and the interior $\mathring{P}$ of $P$. Its closure is called a closed region of $(P,\mathcal{H})$.

The \emph{(closed) Ehrhart quasipolynomial} of an inside-out polytope $(P,\mathcal{H})$ is defined by
$$\mathbf{Ehr}((P,\mathcal{H}),k)=\sum_{x\in \mathbb{Z}^l\cap k\cdot P}m_{P,\mathcal{H}}(x),$$
where $m_{P,\mathcal{H}}(x)$ is the number of closed regions of $(P,\mathcal{H})$ containing $x$. The \emph{open Ehrhart quasipolynomial} is defined by
$$\mathbf{Ehr}((P,\mathcal{H})^\circ,k)=|(k\cdot \mathring{P}\setminus\cup\mathcal{H})\cap \mathbb{Z}^l|.$$ 
% Similar to the Ehrhart polynomials for a polytope in Theorem~\ref{Ehrhart}, Ehrhart quasipolynomials
The following theorem is an analogue of the Ehrhart--Macdonald reciprocity theorem (Theorem~\ref{Ehrhart}).
% Similar to the Ehrhart-Macdonald reciprocity theorem, the following theorem 동사 that two Ehrhart quasipolynomials for an inside-out polytope satisfy a reciprocity.

\begin{thm}\cite[Theorem 4.1]{BZ06}\label{Ehrhart for Inside-out}
If $(P,\mathcal{H})$ is a closed, rational inside-out polytope, then  $\mathbf{Ehr}((P,\mathcal{H}),k)$ and $\mathbf{Ehr}((P,\mathcal{H})^\circ,k)$ are quasipolynomials in $k$, and $\mathbf{Ehr}((P,\mathcal{H}),0)$ equals to the number of regions in $(P,\mathcal{H})$.

Furthermore, 
$$\mathbf{Ehr}((P,\mathcal{H})^\circ,k)=(-1)^{dim P}\mathbf{Ehr}((P,\mathcal{H}),-k).$$
\end{thm}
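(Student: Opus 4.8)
The plan is to realize the nowhere-zero integral $k$-local tensions of $G$ as the lattice points of the open inside-out polytope $(P,\mathcal{H})$ where $P$ is the cube-like polytope cut out from $\ker D(G)$ by the inequalities $-1\le p(e)\le 1$ and $\mathcal{H}=\mathcal{H}_G$ is the hyperplane arrangement from Equation~\eqref{Equation: H_G} (restricted to $\ker D(G)$). Concretely, first I would identify $\ker D(G)$ with $\mathbb{R}^{l}$ for $l=\dim\ker D(G)$ and set $P=\{p\in\ker D(G): -1\le p(e)\le 1\text{ for all }e\in E\}$, which is a rational (indeed lattice) polytope since $D(G)$ is an integer matrix. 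Then $(k\cdot\mathring P\setminus\cup\mathcal{H})\cap\mathbb{Z}^E$ is exactly the set of integer vectors $t\in\ker D(G)$ with $-k<t(e)<k$ and $t(e)\neq 0$ for all $e$; these are precisely the nowhere-zero integral $k$-local tensions. Hence
\[
\tau^{loc}_{\mathbb{Z}}(G;k)=\mathbf{Ehr}((P,\mathcal{H})^\circ,k),
\]
and by Theorem~\ref{Ehrhart for Inside-out} this is a quasipolynomial (in fact one should note it is a genuine polynomial, but the statement only requires the reciprocity, so I would not dwell on this) with $(-1)^{\dim P}\mathbf{Ehr}((P,\mathcal{H}),-k)=\mathbf{Ehr}((P,\mathcal{H})^\circ,k)$.

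Next I would interpret the closed Ehrhart quasipolynomial combinatorially at dilation factor $k$. By definition $\mathbf{Ehr}((P,\mathcal{H}),k)=\sum_{x\in\mathbb{Z}^E\cap k\cdot P}m_{P,\mathcal{H}}(x)$, so it counts pairs $(x,\overline R)$ where $x\in\ker D(G)$ is an integer vector with $-k\le x(e)\le k$ for all $e$, and $\overline R$ is a closed region of $(P,\mathcal{H})$ containing $x$. Writing $x=kt'$ suggests the substitution $k\mapsto k$ and interpreting $x$ as an integral $(k+1)$-local tension (allowing entries of absolute value up to $k$, i.e. in $\{-k,\dots,k\}$). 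The closed regions of $(P,\mathcal{H})$ are the closures of open regions of $\mathcal{H}_G$ meeting $\mathring P$; by Lemma~\ref{boundary acyclic orientation} the open regions of $\mathcal{H}_G$ are in bijection with $BAO(G)$ via $R\mapsto\sigma(R)$. A point $x$ lies in the closed region $\overline R$ precisely when the sign vector of $x$ is "compatible" with the sign pattern defining $R$, i.e. $x(e)>0$ forces $\sigma(R)$ to agree with $\sigma$ on $e$ and $x(e)<0$ forces disagreement — which is exactly the compatibility condition in the statement between the integral local tension $x$ and the boundary acyclic orientation $\sigma(R)$. Thus $m_{P,\mathcal{H}}(x)$ equals the number of boundary acyclic orientations compatible with $x$ (with the understanding that coordinates where $x(e)=0$ are unconstrained, matching that $x$ then lies on the hyperplane $H_e$ and every region on either side is allowed). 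Therefore $\mathbf{Ehr}((P,\mathcal{H}),k)$ counts pairs $(t,\mathfrak o)$ with $t$ an integral $(k+1)$-local tension and $\mathfrak o\in BAO(G)$ compatible with $t$; evaluating at $-k$ and taking absolute values (absorbing the sign $(-1)^{\dim P}$) gives the claimed formula, and the "in particular" clause follows by setting $k=0$ and invoking the last sentence of Theorem~\ref{Ehrhart for Inside-out}, since $\mathbf{Ehr}((P,\mathcal{H}),0)$ is the number of regions of $(P,\mathcal{H})$, which equals the number of open regions of $\mathcal{H}_G$ (all of which meet the full-dimensional cube $\mathring P$, since each region is a cone through the origin), hence $|BAO(G)|$.

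The main obstacle I expect is the bookkeeping around the degenerate coordinates and the contractible-loop case. When $t(e)=0$, the point $x$ lies on the hyperplane $H_e$, so it is contained in the closures of regions on both sides of $H_e$; I need to check that "the number of closed regions of $(P,\mathcal{H})$ containing $x$" correctly equals "the number of boundary acyclic orientations compatible with $t$" under the convention that compatibility imposes no constraint on edges where $t$ vanishes — this requires verifying that every sign assignment to the zero-coordinates of $x$ that is otherwise consistent actually gives rise to an open region of $\mathcal{H}_G$ (not an empty chamber), which should follow from Lemma~\ref{boundary acyclic orientation} applied to the appropriate minor $G\ssetminus\mathrm{supp}(t)$ together with an argument that boundary-acyclic orientations of a minor extend compatibly. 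Additionally, if $G$ has a contractible loop then $\mathcal{H}_G$ is empty and $\ker D(G)$ has no valid local tensions at all, so both sides vanish and the theorem holds trivially; I would dispose of this case at the outset and assume $G$ has no contractible loop thereafter. The remaining verifications — that $D(G)$ being an integer matrix makes $(P,\mathcal{H})$ a rational inside-out polytope, and that $\dim P=\dim\ker D(G)=|E|-\mathrm{rank}\,D(G)$ so the sign $(-1)^{\dim P}$ matches — are routine.
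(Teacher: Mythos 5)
You have proved the wrong statement. The theorem you were asked about is Theorem~\ref{Ehrhart for Inside-out}, the Beck--Zaslavsky result on Ehrhart quasipolynomials of rational inside-out polytopes, which this paper \emph{cites} as \cite[Theorem~4.1]{BZ06} and does not prove. A proof of that theorem would have to argue directly about lattice-point enumeration in a rational polytope sliced by a rational hyperplane arrangement: decompose $P$ into the closed regions of $(P,\mathcal{H})$, apply Ehrhart--Macdonald reciprocity (Theorem~\ref{Ehrhart}) region by region to the relatively open faces, and use inclusion--exclusion over the intersection lattice of $\mathcal{H}$ to establish quasipolynomiality, the reciprocity $\mathbf{Ehr}((P,\mathcal{H})^\circ,k)=(-1)^{\dim P}\mathbf{Ehr}((P,\mathcal{H}),-k)$, and the fact that the constant term counts regions.

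What you have actually written is a proof of Theorem~\ref{thm: reciprocity for integral local tension} (the reciprocity for integral local tensions), and indeed your argument \emph{invokes} Theorem~\ref{Ehrhart for Inside-out} as a black box in its second paragraph. As a proof of Theorem~\ref{thm: reciprocity for integral local tension} your proposal is correct and closely mirrors the paper's own argument there: you identify the lattice points of the open inside-out polytope $(P,\mathcal{H}_G)^\circ$ with nowhere-zero integral local tensions, you use Lemma~\ref{boundary acyclic orientation} to match closed regions with boundary acyclic orientations, and you check that the multiplicity $m_{P,\mathcal{H}}(x)$ counts compatible orientations. Your handling of the degenerate coordinates and the contractible-loop case is more careful than the paper's terse treatment and is a genuine improvement in rigor. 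But none of this constitutes a proof of Theorem~\ref{Ehrhart for Inside-out}; please redirect the argument to the Beck--Zaslavsky theorem itself if that is the intended target, or note explicitly that it is being taken as given.
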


Let $G=(V,E,F)$ be a graph on a surface with an orientation $\sigma$. 
% Recall that the face matrix $D(G)\in \{0,\pm 1\}^{{F}\times E}$ is defined as follows.
% \[ D(G)_{(f,e)} =\overrightarrow{\partial_\sigma f}(e).\]
Now we show that $\tau^{loc}_\mathbb{Z}(k)$ is an Ehrhart quasipolynomial of an inside-out polytope.
Let 
\[
\mathcal{H}_G=ker D(G) \setminus \bigcup_{e\in E} H_e
\]
as in Equation~\eqref{Equation: H_G} and
$P$ be the polytope given by 
\[
P= [-1,1]^E\cap ker D(G).
\] 
An integral $k$-local tension corresponds to a point $x\in ker D(G) \cap \mathbb{Z}^E$ such that $\frac{1}{k}x\in \mathring{P}$. Furthermore, this point $x$ is nowhere-zero if and only if $\frac{1}{k}x\notin \cup_{e\in E} H_e$. Therefore, we have
$$\tau^{loc}_\mathbb{Z}(k)=\mathbf{Ehr}((P,\mathcal{H})^\circ,k).$$
We are now in a position to give a proof of Theorem~\ref{thm: reciprocity for integral local tension}.
% Using Ehrhart reciprocity theorem for inside-out polytope (Theorem \ref{Ehrhart for Inside-out}), we have the following reciprocity theorem for integral local tension polynomials.

% \begin{proof}
% Note that $Ehr((P,\mathcal{H}),k)$ counts a pair $(x,R)$ where $x$ is an integral $(k+1)$-local tension and $R$ is a closed region of $\mathcal{H}$ containing $x$. By Lemma \ref{boundary acyclic orientation}, each region of $\mathcal{H}$ corresponds to a boundary acyclic orientation of $G$. And, a boundary acyclic orientation compatible with $x$ corresponds to a region whose closure contains $x$. By Theorem~\ref{Ehrhart for Inside-out}, this proves the theorem.
% \end{proof}
\begin{proof}(of Theorem~\ref{thm: reciprocity for integral local tension})
Since $P= [-1,1]^E\cap ker D(G)$, there is a natural correspondence between open regions of $\mathcal{H}_G$ and open regions of $(P,\mathcal{H}_G)$, namely
$$
R \mapsto R \cap [-1,1]^E.
$$
Combining this correspondence with Theorem~\ref{Ehrhart for Inside-out},  $|\tau^{loc}_{\mathbb{Z}}(G;-k)|$ is equal to the number of pairs $(x, R)$ where $x$ is an integral $(k+1)$-local tension and $R$ is an open region of $\mathcal{H}_G$ such that $x \in \bar{R}$, where $\bar{R}$ is the closed region of $R$.
For an open region $R$ of $\mathcal{H}_G$, observe that $x$ and the boundary acyclic orientation $\sigma(R)$ (defined in Lemma~\ref{boundary acyclic orientation}) is compatible if and only if $x \in \bar{R}$.
This proves the first assertion. 

There is only one integral $1$-local tension of $G$, $(0,0, \dots, 0)\in \mathbb{Z}^E$. Since it is compatible with any boundary acyclic orientation, the second assertion follows.
\end{proof}
One can also get a reciprocity theorem for ``integral balanced flows" by a similar argument exploiting Ehrhart theory for Inside-out polytopes.

\section*{Acknowledgments}
The research of W.-S. Jung was supported by the National Research Foundation of Korea (NRF) Grant funded by the Korean Government (NRF-2020R1F1A1A01071055). The research of J. Oh was supported by Basic Science Research Program through the National Research Foundation of Korea (NRF) funded by the Ministry of Education (NRF-2020R1A6A3A13076804).

%%%%%%%%%%%%%%%%%%%%%%%%%%%%%%%%%%%%%%%%%%%%%%%%%%%%%%%%%%%%%%%%%%%%%%%%%%%%%%%%%%%%%%%%%%%%%%%%%%%%%%%%%%%%%%%%%%%%%%%%%%%%%%%%%%%%%%%%%%%%%%%%%%%%%%%%%%%%%%%%%%%%%%%%%%%%%%%%%%%%%%%%%%%%%%%%%%%%%%%%%%%%%%%%%%%%%%%%%%%%%%%%%%%%%%%%%%%%%%%%%%%%%%%%%%%%%%%%%%%%%%%%%%%%%%%%%%%%%%%%%%%%%%%%%%%%%%%%%%%%%%%%%%%%%%%%%%%%%%%%
%\ws{레퍼런스에서 책만 단어마다 첫 문자가 대문자인가요? ([1], [17]) 그러면 [7]도 책인 것 같은데 대문자로 해야할까요? }


\begin{thebibliography}{99}

\bibitem{BS18}
M.~Beck and R.~Sanyal.
\newblock Combinatorial Reciprocity Theorems.
\newblock {\em Amer. Math. Soc.}, 2018.

\bibitem{BZ06}
M.~Beck and T.~Zaslavsky.
\newblock Inside-out polytopes.
\newblock {\em Adv. Math.}, 205(1):134--162, 2006.

\bibitem{BZ06JCT}
M.~Beck and T.~Zaslavsky.
\newblock The number of nowhere-zero flows on graphs and signed graphs.
\newblock {\em J. Combin. Theory Ser. B.}, 96:901–918 (2006)

\bibitem{BS12}
F.~Breuer and R.~Sanyal.
\newblock Ehrhart theory, modular flow reciprocity, and the {T}utte polynomial.
\newblock {\em Math. Z.}, 270(1-2):1--18, 2012.


\bibitem{DGMVZ05}
M.~Devos, L.~Goddyn, B.~Mohar, D.~Vertigan and X.~Zhu.
\newblock Coloring-flow duality of embedded graphs.
\newblock {\em Trans. Amer. Math. Soc}, 357(10):3993--4016, 2005.


\bibitem{Ehr77}
E.~Ehrhart.
\newblock Polyn{\^o}mes arithm{\'e}tiques et m{\'e}thode des polyedres en combinatoire.
\newblock {\em Birkhauser}, 1977.

\bibitem{EM13}
J.~Ellis-Monaghan and I.~Moffatt.
\newblock Graphs on Surfaces: Dualities, Polynomials, and Knots.
\newblock {\em Springer}, 2013.


\bibitem{GKRV18}
A.~Goodall, T.~Krajewski, G.~Regts and L.~Vena.
\newblock A {T}utte polynomial for maps.
\newblock {\em Combin. Probab. Comput.}, 27(6):913--945, 2018.


\bibitem{GLRV20}
A.~Goodall, B.~Litjens, G.~Regts and L.~Vena.
\newblock A {T}utte polynomial for maps {II}: the non-orientable case.
\newblock {\em European J. Combin.}, 86:103095, 2020.


\bibitem{GZ83}
C.~Greene and T.~Zaslavsky.
\newblock On the interpretation of Whitney numbers through arrangements of hyperplanes, zonotopes, non-{R}adon partitions, and orientations of graphs.
\newblock {\em Trans. Amer. Math. Soc.}, 280(1):97--126, 1983.

\bibitem{HJLOY19}
B.~Hwang, W.~Jung, K.~Lee, J.~Oh and S.~Yu.
\newblock Acyclic orientation polynomials and the sink theorem for chromatic symmetric functions.
\newblock {\em J. Combin. Theory Ser. B.}, 149:52–-75, 2021.



\bibitem{LasV77}
M.~Las Vergnas.
\newblock Acyclic and totally cyclic orientations of combinatorial geometries.
\newblock {\em Discrete Math.}, 20:51--61, 1977.

\bibitem{Koc02}
M.~Kochol.
\newblock Polynomials associated with nowhere-zero flows.
\newblock {\em J. Combin. Theory Ser. B.}, 84:260–-269, 2002.

\bibitem{Noy01}
M.~Noy.
\newblock Acyclic and totally cyclic orientations in planar graphs.
\newblock {\em Amer. Math. Monthly.}, 108(1):66--68, 2001.

\bibitem{Rot64}
G.~Rota.
\newblock On the foundations of combinatorial theory {I}. Theory of {M}{\"o}bius functions.
\newblock {\em Z. Wahrscheinlichkeitstheor. verw. Geb.}, 2(4):340--368, 1964.

\bibitem{Sta73}
R.~Stanley.
\newblock Acyclic orientations of graphs.
\newblock {\em Discrete Math.}, 5(2):171--178, 1973.

\bibitem{Sta75}
R.~Stanley.
\newblock Combinatorial reciprocity theorems.
\newblock {\em Adv. Math.}, 14:194--253, 1974.

\bibitem{Sta86}
R.~Stanley.
\newblock Enumerative {C}ombinatorics, {V}ol. 1.
\newblock {\em Cambridge Stud. Adv. Math}, 1986.

\bibitem{Sta95}
R.~Stanley.
\newblock A symmetric function generalization of the chromatic polynomial of a graph.
\newblock {\em Adv. Math.}, 111(1):166--194, 1995.

\bibitem{Tut47}
W.~Tutte.
\newblock A ring in graph theory.
\newblock {\em Math. Proc. Cambridge Philos. Soc.}, 43(1):26--40, 1947.

\bibitem{Tut50}
W.~Tutte.
\newblock On the imbedding of linear graphs in surfaces
\newblock {\em Proc. Lond. Math. Soc.},51:474--483, 1950.

\bibitem{Tut54}
W.~Tutte.
\newblock A contribution to the theory of chromatic polynomials.
\newblock {\em Canad. J. Math.}, 6:80--91, 1954.
\end{thebibliography}
\end{document}